\documentclass{siamart}

\usepackage{amsfonts}

\usepackage{amsmath,cite}
\usepackage{epsfig,epsf,latexsym,subfigure}


\hoffset=-0. in \voffset=-0in \textwidth=6.3in\textheight=8.2in


\numberwithin{equation}{section}
\numberwithin{table}{section}
\numberwithin{figure}{section}

\theoremstyle{plain}
\newtheorem{scheme}{Scheme}[section]
\newtheorem{thm}{Theorem}[section]

\newtheorem{rem}{Remark}[section]

\theoremstyle{plain}


\newcommand{\be}{\begin{equation}}
\newcommand{\ee}{\end{equation}}

\newcommand{\bse}{\begin{subequations}}
\newcommand{\ese}{\end{subequations}}



\def\cF{\mathcal{F}}

\def\bn{\mathbf{n}}

\def\be{\mathbf{e}}
\def\bb{\mathbf{b}}
\def\bx{\mathbf{x}}
\def\ba{\mathbf{a}}

\def\bD{\mathbf{D}}

\def\bI{\mathbf{I}}

\def\bD{\mathbf{D}}

\def\cL{\mathcal{L}}

\def\cG{\mathcal{G}}
\def\cA{\mathcal{A}}
\def\cB{\mathcal{B}}
\def\cN{\mathcal{N}}


\newcommand{\ben}{\begin{eqnarray}}
\newcommand{\een}{\end{eqnarray}}
\newcommand{\beq}{\begin{equation}}
\newcommand{\eeq}{\end{equation}}
\newcommand{\bea}{\begin{array}}
\newcommand{\eea}{\end{array}}
\newcommand{\bef}{\begin{figure}}
\newcommand{\eef}{\end{figure}}

\title{ Arbitrarily High-order Unconditionally Energy Stable Schemes for Thermodynamically Consistent Gradient Flow Models}
\author{ Yuezheng Gong \thanks{College of Science, Nanjing University of Aeronautics and Astronautics, Nanjing 210016, China; Email: gongyuezheng@nuaa.edu.cn.}
\and Jia Zhao \thanks{Department of Mathematics \& Statistics, Utah State University, Logan, UT 84322, USA; Email: jia.zhao@usu.edu.}
\and Qi Wang \thanks{ Beijing Computational Science Research Center, Beijing, China, 100193; Department of Mathematics, University of South Carolina, Columbia, SC 29208, USA; School of Materials and Engineering, Nankai University, Tianjin, China, 300084; Email: qwang@math.sc.edu.}}


\begin{document}
\maketitle

\begin{abstract}
We present a systematical approach to developing arbitrarily high order, unconditionally energy stable numerical schemes for thermodynamically consistent gradient flow models that satisfy energy dissipation laws.  Utilizing the energy quadratization (EQ) method, We formulate the gradient flow model into an equivalent form with a corresponding quadratic free energy functional. Based on the equivalent form with a quadratic energy, we propose two classes of energy stable numerical approximations. In the first approach, we use a prediction-correction strategy to improve the accuracy of linear numerical schemes. In the second approach, we adopt the Gaussian collocation method to discretize the equivalent form with a quadratic energy, arriving at an arbitrarily high-order scheme for gradient flow models. Schemes derived using both approaches are proved rigorously to be unconditionally energy stable. The proposed schemes are then implemented in four gradient flow models numerically to demonstrate their accuracy and effectiveness. Detailed numerical comparisons among these schemes are carried out as well. These numerical strategies are rather general so that they can be readily generalized to solve any thermodynamically consistent PDE models.
\end{abstract}

\maketitle

\section{Introduction}
In nonequilibrium thermodynamics, transient dynamics is stipulated primarily following the "linear response theory", collectively known as the Onsager principle, or equivalently the second law of thermodynamics  \cite{Onsager1931, Li&WangJAM2014, Yang&Li&Forest&Wang2016}.
The second law of thermodynamics has been used to develop non-equilibrium models for a wide range of material systems, ranging from life science, materials science, industrial processing, and engineering. The gradient flow model describing relaxation dynamics is one of the well-known examples, and other examples include most differential constitutive laws for complex fluid flows \cite{Bird87}. The models derived from the second law of thermodynamics are referred to as the thermodynamically consistent models.

For a given thermodynamically consistent, gradient flow model in the form of partial differential equations (PDEs), a computationally efficient, energy-dissipation-rate preserving, and high order numerical approximation is always desired. In the literature, the numerical scheme that preserves the energy dissipation property is known as the energy stable scheme \cite{Eyre1998}. If the energy-dissipation-rate preserving property doesn't depend on the time-step size, the schemes are called unconditionally energy stable.

In the past,  two widely used, distinct strategies for developing (unconditionally) energy stable schemes have been proposed, which are the convex splitting method \cite{Eyre1998,Wang&WiseSINUMA2011,Shen&Wang&Wise2012,Wang&Wang&WiseDCDS2010,Han&WangJCP2014,WiseSINUMA2009,Chen&Wang&Wang&WiseJSC2014} and the stabilizing method \cite{Zhao&Li&Wang&YangJSC2017,Li2016Characterizing,Shen10_1,Shen2010,ShenJ4,Zhao&Wang&YangCMAME2016,Feng&Tang&YangEAJAM2013}. The convex-splitting strategy relies on the existence of a pair of convex components that give rise to the free energy functional as a difference of the two convex functionals. If such a splitting exists in the free energy functional, a nonlinear scheme can be devised to render an unconditionally energy stable scheme.  The convex splitting schemes have been widely applied to the gradient flow models \cite{WangChen-CH, Eyre1998, Wang&WiseSINUMA2011, Shen&Wang&Wise2012, Wang&Wang&WiseDCDS2010, Han&WangJCP2014, WangCheng-1,WangCheng-2, WangCheng-3,WangCheng-4,WangCheng-6}. The stabilizing approach augments discretized equations by high order terms to turn the scheme into an energy stable scheme. Usually, this is accomplished by adding additional dissipation to the numerical scheme. Both strategies can yield a dissipative scheme, but do not guarantee to preserve the energy dissipation rate. Some other related work include \cite{Han&WangJCP2014,Gomez&HughesJCP2011,Gomez&NogueiraCMAME2012,Feng&Wang&Wise&Zhang2017,Guo&Lin&Lowengrub&Wise2017,Ju&Li&Qiao&ZhangMC2017,Guillen&Tierra2014,Gomez-crystal} .

Recently, Badia, Guillen-Gonzales, Gutierres-Santacreu, and Tierra pioneered a new idea of transforming the free energy into a quadratic functional to derive energy stable schemes \cite{Badia&Guillen-Gonzalez&Gutierrez-SantacreuJCP2011, Guillen&Tierra2014}. This is amplified and systematically applied to many specific thermodynamic and hydrodynamic models by Yang, Zhao, Shen and Wang, etc. \cite{Yang-EQ-PFC, Yang&ZhaoViscousCH, Yang&Zhao&Wang&ShenM3AS2017, Zhao&Wang&YangCMAME2016, Zhao&Yang&Gong&WangCMAME2017, Gong&Zhao&WangACM, Gong&Zhao&Yang&WangSISC}.
Yang, Zhao, and Wang coined the name Invariant Energy Quadratization (IEQ) method for this class of methods. Later, we abbreviated the name to simply Energy Quadratization (EQ) method, which is more appropriate. This strategy bypasses the complications in the other methods to derive linear, second-order energy stable schemes in time readily. It is so general that the EQ approach has little restriction on the specific expression of the free energy functional. Recently, Shen et al. \cite{SAV-1, SAV-2} implemented the idea of EQ using a scalar auxiliary variable and called it the SAV method. However, using either EQ or SAV strategy, one has only designed and proved rigorously unconditionally energy stable numerical approximations up to second-order in time so far. This may not be sufficient for some gradient flow problems with long time simulations or sharp transition dynamics. Recently in \cite{RKJCP2017}, the authors propose a high order Runge-Kutta (RK) method for gradient flow problems based on convex splitting methods, but it requires too many stages even to reach 3rd order accuracy, and it does not work for gradient flows with variable mobilities.

In this paper, we develop systematically two classes of numerical approximations exploiting the EQ approach for thermodynamically consistent gradient flow models. In the first strategy, we introduce a prediction step to correct the extrapolation values (in the explicit terms of the linear schemes resulted from the EQ approach).  Specifically, instead of using extrapolations to obtain explicit terms at the desired time level \cite{Yang&ZhaoViscousCH, Yang&Zhao&Wang&ShenM3AS2017, Zhao&Wang&YangCMAME2016, Zhao&Yang&Gong&WangCMAME2017, Gong&Zhao&WangACM, Gong&Zhao&Yang&WangSISC}, we use a fixed-point iteration to predict and correct the terms using values from previous iteration steps. In numerical experiments, we show that this strategy can improve the accuracy of numerical schemes significantly. In the second strategy, we extend the second-order EQ method \cite{Yang&ZhaoViscousCH, Yang&Zhao&Wang&ShenM3AS2017, Zhao&Wang&YangCMAME2016, Zhao&Yang&Gong&WangCMAME2017, Gong&Zhao&WangACM, Gong&Zhao&Yang&WangSISC}
into an arbitrarily high-order energy quadratization (HEQ) method. The HEQ strategy consists of two parts. Firstly, by introducing auxiliary variables, we transform the original gradient flow model into a gradient flow model with an energy functional consisting of only quadratic terms of the unknown variables. The resulting model is referred to as the quadratized gradient flow model. Secondly, we apply the Gaussian collocation method to the quadratized gradient flow model to produce unconditionally energy stable numerical schemes. It turns out that the classical Crank-Nicolson and backward differential formula schemes are special cases of the newly proposed general scheme. Our new numerical strategy provides an elegant solution for developing arbitrarily high order and unconditionally energy stable numerical schemes for gradient flow models. Moreover, the schemes preserve the energy dissipation rate in the transformed variables accurately. Due to their high-order accuracy and unconditional energy stability, these schemes can allow large time steps, making the numerical approximations especially appealing for long time computations.

We organize the rest of the paper as follows. In \S \ref{sec:Model}, we reformulate gradient flow models by using the EQ method. In \S \ref{sec:timeDis}, we present two classes of unconditionally energy stable, numerical approximations for the reformulated EQ model. In \S \ref{sec:spaceDis}, the Fourier pseudospectral method is employed to give rise to the spatial discretization. Then four numerical examples of gradient flow models are shown to validate the efficiency and accuracy of our proposed schemes in \S \ref{sec:Numer}. Finally, we give a concluding remark in the last section.

\section{Gradient Flow Models and Their EQ Reformulation}\label{sec:Model}

We present the general thermodynamically consistent, gradient flow model firstly. Then, we reformulate the general gradient flow system into an equivalent form with a quadratic energy functional using the energy quadratization technique, called EQ reformulation. The EQ reformulation for this class of gradient flow models provides an elegant platform for developing arbitrarily high-order unconditionally energy stable schemes. In this paper, we adopt periodic boundary conditions for simplicity. The results can be readily applied to gradient flow problems with physical boundary conditions so long as the spatial discretization respects the integration-by-parts or summation-by-parts formula.

\subsection{Gradient flow models}

Mathematically, the general form of the governing system of equations of a gradient flow model is given by \cite{Zhao2018EQreview,SAV-2}
\beq \label{eq:gradient-flow}
\frac{\partial}{\partial t}\Phi = \cG \frac{\delta F}{\delta \Phi},
\eeq
where $\Phi=(\phi_1,\cdots,\phi_s)^T$ are the state variables, $\cG$ is an $s$-by-$s$ mobility matrix operator which is negative semi-definite and may depend on $\Phi$. Here $F$ is the effective free energy of the material system, and $\frac{\delta F}{\delta \Phi}$ is the variational derivative of the free energy functional with respect to the state variables, known as the chemical potential. Then, the triple $(\Phi,\cG,F)$ uniquely defines a gradient flow model. One intrinsic property  of \eqref{eq:gradient-flow} is the energy dissipation law
\beq\label{EDL}
\frac{d F}{d t} = \left( \frac{\delta F}{\delta \Phi}, \frac{\partial\Phi}{\partial t} \right) = \left( \frac{\delta F}{\delta \Phi}, \cG \frac{\delta F}{\delta \Phi} \right) \leq 0,
\eeq
where the inner product is defined by $({\bf f}, {\bf g}) = \sum\limits_{i=1}^s \int_\Omega f_i g_i d\bx$, $\forall \mathbf{f}, \mathbf{g} \in (L^2(\Omega))^s$, and $\Omega$ is the material domain. Note that the energy dissipation law \eqref{EDL} holds only for suitable boundary conditions. These boundary conditions include the periodic boundary conditions and the boundary conditions that make the boundary integrals resulted during the integration by parts vanish.

\subsection{Model reformulation using the EQ approach}

We reformulate the general gradient flow model \eqref{eq:gradient-flow} by first transforming the free energy into a quadratic form. We illustrate  the idea using a simple case, where the  free energy is given by
\beq\label{initial-energy}
F = \frac{1}{2}(\Phi, \cL \Phi ) + \big( f (\Phi) ,1\big).
\eeq
Here $\cL$ is a linear, self-adjoint, positive definite operator and $f$ is the bulk part of the free energy density, which is bounded from below for physically accessible state of $\Phi$. Then the  free energy $F$ can be rewritten into
\beq\label{EQ-energy}
\cF = \frac{1}{2}(\Phi, \cL \Phi) + \frac{1}{2}\|q\|^2 - A,
\eeq
where  $q = \sqrt{2\Big(f (\Phi) +\frac{A}{|\Omega|}\Big)}$, and $A$ is a constant large enough to make $q$ well-defined.  Here $\| \bullet \|$ represents the $L^2$ norm, i.e. $\| f\| = \sqrt{ \int_\Omega f^2 d\bx }$, $\forall f \in L^2(\Omega)$.

For instance, given a Ginzburg-Landau free energy
\beq\label{Ginzburg-Landau-energy}
F = \frac{\varepsilon^2}{2}\|\nabla \phi\|^2 + \frac{1}{4}\|\phi^2-1\|^2,
\eeq
if we conduct an integration by part once, it converts to
\beq
F = \frac{1}{2} \Big( -\varepsilon^2 \Delta \phi+\gamma_0 \phi, \phi  \Big) + \frac{1}{4}\|\phi^2-1-\gamma_0\|^2 + \frac{\varepsilon^2}{2}\int_{\partial \Omega} \phi \nabla \phi \cdot \bn d S - \left(\frac{\gamma_0}{2}+ \frac{\gamma_0^2}{4}\right)|\Omega|,
\eeq
we identify $\cL=-\varepsilon^2 \Delta+\gamma_0$ and $q = \frac{1}{\sqrt{2}}( \phi^2-1-\gamma_0)$,  assuming the boundary integral term vanishes.

Denote $g(\Phi) = \sqrt{2\left(f (\Phi) +\frac{A}{|\Omega|}\right)}$. Then we reformulate model \eqref{eq:gradient-flow} to an equivalent form
\beq \label{eq:gradient-flow-EQ-scalar}
\left\{
\bea{l}
\frac{\partial}{\partial t}\Phi = \cG \Big(\cL \Phi + q \frac{\partial g}{\partial \Phi} \Big), \\
\frac{\partial}{\partial t} q = \frac{\partial g}{\partial \Phi} \cdot \frac{\partial \Phi}{\partial t},
\eea
\right.
\eeq
where $\ba \cdot \bb = \sum\limits_{i=1}^s a_i b_i.$ Letting $\Psi = \left( \bea{l} \Phi \\ q \eea \right)$, system \eqref{eq:gradient-flow-EQ-scalar} can be written in the following compact form
\beq \label{eq:gradient-flow-EQ-full}
\frac{\partial}{\partial t} \Psi = \cN(\Psi) \cB \Psi,
\eeq
where $\cN(\Psi) = \cA^* \cG \cA$ is an $(s+1)\times(s+1)$ matrix operator depending on  $\Psi$, $\cA^*$ is the adjoint operator of $\cA,$ and $$\cA = \left(\bI_s \quad \frac{\partial g}{\partial \Phi}\right)_{s\times(s+1)},\quad \cB=\textrm{diag}(\cL, 1)_{(s+1)\times(s+1)}.$$ In this case, we have $\cA^* = \cA^T.$ Since $\cG$ is negative semi-definite, it can be  shown easily that $\cN(\Psi)$ is also negative semi-definite for any $\Psi$. In addition, $\cB$ is a linear, self-adjoint, positive definite operator thanks to the property of $\cL$. Define the $\cB$-norm as
\beq \label{eq:B-norm}
\|\Psi\|_{\cB} = \sqrt{( \Psi, \cB \Psi )},
\eeq
then the free energy \eqref{EQ-energy} is rewritten as $\cF = \frac{1}{2}\|\Psi\|_{\cB}^2-A.$ System \eqref{eq:gradient-flow-EQ-full} preserves the following energy dissipation law
\beq
\frac{d \cF}{d t} = \left( \frac{\delta \cF}{\delta \Psi} , \frac{\partial\Psi}{\partial t} \right) = \left( \cB \Psi , \cN(\Psi)  \cB\Psi \right) \leq 0.
\eeq

The original gradient flow system defined by $(\Phi,\cG, F)$ is transformed into a new system $(\Psi,\cN,\cF),$ where the free energy $\cF$ in the new system is regarded as a quadratic functional of $\Psi$. Therefore, this process is called energy quadratization reformulation.

\begin{rem}
More generally, if $q = g(\Phi,\nabla \Phi)=\sqrt{2\left(f(\Phi,\nabla \Phi) +\frac{A}{|\Omega|}\right)}$, then the equivalent EQ form is given by
\beq \label{eq:gradient-flow-EQ-scalar-2}
\left\{
\bea{l}
\frac{\partial}{\partial t}\Phi = \cG \Big(\cL \Phi + q \frac{\partial g}{\partial \Phi} - \nabla \cdot ( q \frac{\partial g}{\partial \nabla \Phi})\Big), \\
\frac{\partial}{\partial t} q = \frac{\partial g}{\partial \Phi}\cdot \frac{\partial \Phi}{\partial t} + \frac{\partial g}{\partial \nabla \Phi}\cdot \nabla \frac{\partial \Phi}{\partial t}.
\eea
\right.
\eeq
The system can again be written into the form \eqref{eq:gradient-flow-EQ-full} with the operators $\cA$ and $\cA^*$ given by
\beq
\cA = \left(\bI_s \quad \frac{\partial g}{\partial \Phi} - \nabla \cdot \frac{\partial g}{\partial \nabla \Phi}\right),\quad \cA^* = \left( \bea{l} \bI_s \\ \Big(\frac{\partial g}{\partial \Phi}+ \frac{\partial g}{\partial \nabla \Phi}\cdot \nabla\Big)^T \eea \right).\eeq
\end{rem}

\begin{rem}
In general, the EQ reformulation approach can be applied to any gradient flow models with free energies of high-order spatial derivatives so long as they are thermodynamically consistent and energy dissipative.
\end{rem}

We next discuss how to design accurate and energy stable schemes for gradient flow models. We will demonstrate that the equivalent form in \eqref{eq:gradient-flow-EQ-full} can be handled more easily than its original system given in \eqref{eq:gradient-flow}.

\section{Temporal Discretization}\label{sec:timeDis}

In this section, a class of linear second-order prediction-correction schemes and a class of arbitrarily high-order Gaussian collocation schemes are proposed respectively, where all the schemes are shown to be unconditionally energy stable, i.e., the energy dissipation property is conserved for any time step sizes at the semi-discrete level.

\subsection{Linear energy stable schemes}

As we know, the linear-implicit Crank-Nicolson (LCN) scheme and the linear-implicit second-order backward differentiation/extrapolation (LBDF2) method can be applied directly for discretizing the reformulated EQ system \eqref{eq:gradient-flow-EQ-full} in time to obtain linear unconditionally energy stable schemes \cite{Gong&Zhao&WangACM, Yang&Zhao&WangJCP2017, Guillen&Tierra2014, Yang&Zhao&Wang&ShenM3AS2017}. Even though both schemes are second-order accurate, the rate of convergence usually can be reached when the time step is small. Here, we propose a new class of prediction-correction schemes motivated by the works in \cite{predictor-corrector, ConvexSplittingPredictor, Liu&ShenMMAS2013}. Employing the prediction-correction strategy for the CN scheme or the BDF2 scheme, we obtain the following prediction-correction schemes:

\begin{scheme}[Linear Prediction-Correction Scheme] \label{scheme:PC}
Given $\Psi^{n-1}$ and $\Psi^n$, $\forall n\geq 1$, we obtain $\Psi^{n+1}$ through the following two steps:
\begin{enumerate}
\item  Prediction: predict $\Psi^{n+1}_{*}$ via some efficient and at least second order accurate numerical schemes.

\item  CN correction:
\beq\label{CN-PC}
\frac{\Psi^{n+1} - \Psi^n}{\Delta t} =
\cN\Big(\frac{\Psi^{n+1}_* + \Psi^n}{2}\Big) \cB \frac{\Psi^{n+1} + \Psi^n}{2};
\eeq
or BDF2 correction:
\beq\label{BDF2-PC}
\frac{3\Psi^{n+1} - 4\Psi^n +\Psi^{n-1}}{2 \Delta t} =
\cN(\Psi^{n+1}_*) \cB \Psi^{n+1}.
\eeq
\end{enumerate}
\end{scheme}
For each time step, several prediction strategies can be devised so long as the predicted value $\Psi^{n+1}_{*}$ is at least second-order consistent in time. As an illustration, we list some choices for the prediction step below.

\noindent $\bullet$ Case 1: if we set
\beq \label{CN-BDF-P0}
\Psi^{n+1}_{*} = 2\Psi^{n}-\Psi^{n-1},
\eeq
then  scheme \eqref{CN-PC} and \eqref{BDF2-PC} reduce to LCN and LBDF2 schemes studied in \cite{Gong&Zhao&WangACM}, respectively.

\noindent $\bullet$ Case 2: we set $\Psi^{n+1}_0 = 2\Psi^{n}-\Psi^{n-1}.$ For $i=0$ to $N-1$, we compute $\Psi^{n+1}_{i+1}$ using
\beq\label{CN-P1}
\frac{\Psi^{n+1}_{i+1} - \Psi^n}{\Delta t} =
\cN\Big(\frac{\Psi^{n+1}_i + \Psi^n}{2}\Big) \cB \frac{\Psi^{n+1}_{i+1} + \Psi^n}{2};
\eeq
or
\beq\label{BDF2-P1}
\frac{3\Psi^{n+1}_{i+1} - 4\Psi^n +\Psi^{n-1}}{2 \Delta t} =
\cN(\Psi^{n+1}_i) \cB \Psi^{n+1}_{i+1}.
\eeq
If  $\|\Psi^{n+1}_{i+1}-\Psi^{n+1}_{i}\|_\infty < \varepsilon_0$ and $i+1<N$, we stop the iteration and  set $\Psi^{n+1}_{*} = \Psi^{n+1}_{i+1}$; otherwise, we set $\Psi^{n+1}_{*} = \Psi^{n+1}_N.$

\noindent $\bullet$ Case 3: we rewrite $\cN(\Psi)$ into the sum of $\cN_1$, a linear  operator of constant coefficients  and a nonlinear operator $\cN_2(\Psi)$, $\cN(\Psi) = \cN_1 + \cN_2(\Psi)$.
Set $\Psi^{n+1}_0 = 2\Psi^{n}-\Psi^{n-1}.$ For $i=0$ to $N-1$, we compute $\Psi^{n+1}_{i+1}$ by
\beq\label{CN-P2}
\frac{\Psi^{n+1}_{i+1} - \Psi^n}{\Delta t} = \cN_1 \cB \frac{\Psi^{n+1}_{i+1} + \Psi^n}{2} +
\cN_2\Big(\frac{\Psi^{n+1}_i + \Psi^n}{2}\Big) \cB \frac{\Psi^{n+1}_i + \Psi^n}{2};
\eeq
or
\beq\label{BDF2-P2}
\frac{3\Psi^{n+1}_{i+1} - 4\Psi^n +\Psi^{n-1}}{2 \Delta t} = \cN_1 \cB \Psi^{n+1}_{i+1} + \cN_2(\Psi^{n+1}_i) \cB \Psi^{n+1}_i.
\eeq
If  $\|\Psi^{n+1}_{i+1}-\Psi^{n+1}_{i}\|_\infty < \varepsilon_0$ and  $i+1<N$, we stop the iteration and  set $\Psi^{n+1}_{*} = \Psi^{n+1}_{i+1}$; otherwise, we set $\Psi^{n+1}_{*} = \Psi^{n+1}_N.$

\begin{rem}
In some practical implementations, Case 3 is more efficient than Case 2 because $\cN_1$ is a linear operator of constant coefficients so that the Fast
Fourier transform (FFT) can be readily applied to the linear part of the scheme.
\end{rem}

\begin{rem}
If $N$ is large enough, the prediction-correction scheme \eqref{CN-PC} with \eqref{CN-P1} (or \eqref{CN-P2}) approximates to the fully implicit CN scheme while the prediction-correction scheme \eqref{BDF2-PC} with \eqref{BDF2-P1} (or \eqref{BDF2-P2}) approximates to the traditional BDF2 scheme. There is no theoretical result on the choice of iteration step $N$. From our numerical tests, several iteration steps $N \leq 5$ would improve the accuracy noticeably.
\end{rem}

\begin{rem}
In Scheme \ref{scheme:PC}, the initial second level datum $\Psi^1$ is computed by
\beq
\frac{\Psi^1 - \Psi^0}{\Delta t} =
\cN\Big(\frac{\Psi^1_* + \Psi^0}{2}\Big) \cB \frac{\Psi^1 + \Psi^0}{2},
\eeq
where $\Psi^1_*$ is given by
\beq
\frac{\Psi^1_* - \Psi^0}{\Delta t} = \cN_1 \cB \frac{\Psi^1_* + \Psi^0}{2} +
\cN_2(\Psi^0) \cB \Psi^0.
\eeq
\end{rem}

Similar to the linear unconditionally energy stable schemes in \cite{Gong&Zhao&WangACM}, we have the following theorems.
\begin{thm}
Scheme \eqref{CN-PC} satisfies the following energy dissipation law
\beq\label{CN-PC-EDL}
\frac{F^{n+1} - F^n}{\Delta t}  = \left( \cB \Psi^{n+\frac{1}{2}},   \cN\Big(\frac{\Psi^{n+1}_* + \Psi^n}{2}\Big) \cB \Psi^{n+\frac{1}{2}} \right)\leq 0,~ \forall n\geq 1,
\eeq
with $F^{n} = \frac{1}{2}\| \Psi^{n}\|_{\cB}^2-A$ and $\Psi^{n+\frac{1}{2}} = (\Psi^{n+1}+\Psi^{n})/2.$
\end{thm}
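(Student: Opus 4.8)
The plan is to carry out the standard discrete energy estimate by pairing the scheme \eqref{CN-PC} with $\cB\Psi^{n+\frac{1}{2}}$ in the $L^2$ inner product, so that the left-hand side reproduces the discrete time derivative of the free energy and the right-hand side is immediately controlled by the negative semi-definiteness of $\cN$ established after \eqref{eq:gradient-flow-EQ-full}.

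First I would record the algebraic identity for the difference of two quadratic forms. Since $\cB$ is linear and self-adjoint, for any $a,b$ we have $(a,\cB a)-(b,\cB b)=(a-b,\cB(a+b))$, because the cross terms $(a,\cB b)$ and $(b,\cB a)$ coincide. Applying this with $a=\Psi^{n+1}$, $b=\Psi^n$ and recalling $\Psi^{n+\frac{1}{2}}=(\Psi^{n+1}+\Psi^n)/2$ gives
\[
F^{n+1}-F^n=\tfrac{1}{2}\big(\Psi^{n+1}-\Psi^n,\cB(\Psi^{n+1}+\Psi^n)\big)=\big(\Psi^{n+1}-\Psi^n,\cB\Psi^{n+\frac{1}{2}}\big),
\]
where the additive constant $A$ cancels in the difference and hence plays no role.

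Next I would divide by $\Delta t$ and substitute the scheme. Writing \eqref{CN-PC} in the form $\tfrac{\Psi^{n+1}-\Psi^n}{\Delta t}=\cN\big(\tfrac{\Psi^{n+1}_*+\Psi^n}{2}\big)\cB\Psi^{n+\frac{1}{2}}$ and inserting it into the identity above yields
\[
\frac{F^{n+1}-F^n}{\Delta t}=\Big(\cN\big(\tfrac{\Psi^{n+1}_*+\Psi^n}{2}\big)\cB\Psi^{n+\frac{1}{2}},\,\cB\Psi^{n+\frac{1}{2}}\Big)=\Big(\cB\Psi^{n+\frac{1}{2}},\,\cN\big(\tfrac{\Psi^{n+1}_*+\Psi^n}{2}\big)\cB\Psi^{n+\frac{1}{2}}\Big),
\]
which is exactly the claimed equality \eqref{CN-PC-EDL}. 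The inequality $\leq 0$ then follows at once because $\cN(\Psi)$ is negative semi-definite for every argument $\Psi$; here the argument is simply the predicted midpoint $\tfrac{\Psi^{n+1}_*+\Psi^n}{2}$, and stability does not depend on its accuracy.

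The calculation is entirely routine, so there is no genuine obstacle; the only point requiring care is conceptual, namely that the argument of $\cN$ (the explicit, predicted midpoint) differs from the midpoint $\Psi^{n+\frac{1}{2}}$ appearing in the quadratic form. This mismatch is precisely what renders the scheme linear, and it is harmless for stability since negative semi-definiteness of $\cN$ holds pointwise in its argument, regardless of the state at which it is evaluated. I would also note that the identity and substitution are valid for all $n\geq 1$, and that the self-adjointness and positive definiteness of $\cB$ guarantee $\|\cdot\|_\cB$ is a genuine norm, so that $F^n=\tfrac{1}{2}\|\Psi^n\|_{\cB}^2-A$ is well defined.
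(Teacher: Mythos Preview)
Your proof is correct and follows exactly the standard approach the paper implicitly relies on (the paper omits the proof of this theorem, but the argument you give is the same one used explicitly in the proofs of Theorems~\ref{thm:High-EQ} and~\ref{thm:High-EQ-RK}): exploit the self-adjointness of $\cB$ to write $F^{n+1}-F^n=(\Psi^{n+1}-\Psi^n,\cB\Psi^{n+\frac{1}{2}})$, substitute the scheme, and invoke the negative semi-definiteness of $\cN(\cdot)$ at the predicted midpoint. Your remark that the explicit argument of $\cN$ need not coincide with $\Psi^{n+\frac{1}{2}}$ for the inequality to hold is exactly the right observation.
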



\begin{thm}
Scheme \eqref{BDF2-PC} satisfies the following energy identity
\beq\label{BDF2-PC-EDL}
\frac{F^{n+\frac{3}{2}} - F^{n+\frac{1}{2}} + \widetilde{F}^{n+1}}{\Delta t} =  \Big( \cB \Psi^{n+1},   \cN(\Psi_{*}^{n+1}) \cB \Psi^{n+1} \Big)\leq 0,~ \forall n\geq 1,
\eeq
with $F^{n+\frac{1}{2}} = \frac{1}{4}\Big( \|\Psi^{n}\|_{\cB}^2 + \|2\Psi^{n} - \Psi^{n-1}\|_{\cB}^2 \Big)- A $ and $\widetilde{F}^{n+1} = \frac{1}{4}\|\Psi^{n+1} - 2\Psi^{n} + \Psi^{n-1}\|_{\cB}^2.$
\end{thm}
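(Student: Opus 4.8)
The plan is to test the scheme \eqref{BDF2-PC} against $\cB\Psi^{n+1}$ in the $L^2$ inner product and to recast the resulting backward-difference quantity as the combination of $\cB$-norms appearing on the left of \eqref{BDF2-PC-EDL}. First I would pair both sides of \eqref{BDF2-PC} with $\cB\Psi^{n+1}$, which gives
\[
\frac{1}{2\Delta t}\big(3\Psi^{n+1} - 4\Psi^n + \Psi^{n-1},\, \cB\Psi^{n+1}\big) = \big(\cN(\Psi_*^{n+1})\cB\Psi^{n+1},\, \cB\Psi^{n+1}\big).
\]
Since $\cN(\Psi)$ is negative semi-definite for \emph{every} $\Psi$ (established in \S\ref{sec:Model}), the right-hand side is $\leq 0$ regardless of the predictor $\Psi_*^{n+1}$. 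Thus the sign claim in \eqref{BDF2-PC-EDL} is immediate once the left-hand side is shown to equal the energy increment divided by $\Delta t$.

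The heart of the argument is a purely algebraic polarization identity for the second-order backward difference. Writing $\langle u, v\rangle := (u, \cB v)$, which is a symmetric bilinear form because $\cB$ is self-adjoint and positive definite, so that $\|u\|_{\cB}^2 = \langle u, u\rangle$, and abbreviating $a = \Psi^{n+1}$, $b = \Psi^n$, $c = \Psi^{n-1}$, I would verify
\[
2\langle 3a - 4b + c,\, a\rangle = \|a\|_{\cB}^2 + \|2a - b\|_{\cB}^2 - \|b\|_{\cB}^2 - \|2b - c\|_{\cB}^2 + \|a - 2b + c\|_{\cB}^2.
\]
This is checked by expanding each squared $\cB$-norm through bilinearity and symmetry and collecting the coefficients of $\|a\|_{\cB}^2$, $\|b\|_{\cB}^2$, $\|c\|_{\cB}^2$, $\langle a,b\rangle$, $\langle a,c\rangle$, and $\langle b,c\rangle$: the $\|b\|_{\cB}^2$, $\|c\|_{\cB}^2$, and $\langle b,c\rangle$ contributions cancel, leaving precisely $6\|a\|_{\cB}^2 - 8\langle a,b\rangle + 2\langle a,c\rangle$, which matches the left-hand side.

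With this identity in hand, I would recognize the right-hand grouping as exactly $4\big(F^{n+\frac{3}{2}} - F^{n+\frac{1}{2}} + \widetilde{F}^{n+1}\big)$: the first two norms give $4F^{n+\frac{3}{2}} + 4A$ (by the definition of $F^{n+\frac{1}{2}}$ with $n$ advanced to $n+1$), the middle two give $-4F^{n+\frac{1}{2}} - 4A$, and the last norm gives $4\widetilde{F}^{n+1}$, the additive constants $A$ cancelling. Combining the polarization identity with the paired equation and dividing by $\Delta t$ then yields
\[
\frac{F^{n+\frac{3}{2}} - F^{n+\frac{1}{2}} + \widetilde{F}^{n+1}}{\Delta t} = \big(\cB\Psi^{n+1},\, \cN(\Psi_*^{n+1})\cB\Psi^{n+1}\big),
\]
and the inequality follows from negative semi-definiteness of $\cN$. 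The only delicate point is the term bookkeeping in the polarization identity; everything else is direct substitution. It is worth emphasizing that the predictor $\Psi_*^{n+1}$ enters solely as the argument of $\cN$ and plays no role in the energy identity beyond leaving negative semi-definiteness intact, which is precisely why the result holds for any admissible choice of prediction step.
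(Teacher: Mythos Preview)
Your proof is correct and is precisely the standard argument: pair the scheme with $\cB\Psi^{n+1}$, invoke the classical BDF2 polarization identity $2\langle 3a-4b+c,a\rangle = \|a\|_{\cB}^2 + \|2a-b\|_{\cB}^2 - \|b\|_{\cB}^2 - \|2b-c\|_{\cB}^2 + \|a-2b+c\|_{\cB}^2$, and use negative semi-definiteness of $\cN$. The paper itself omits the proof, referring the reader to \cite{Gong&Zhao&WangACM} for the analogous computation; your write-up is exactly that computation.
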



\begin{rem}
Eqs. \eqref{CN-PC-EDL} and \eqref{BDF2-PC-EDL} imply that the two schemes given by \eqref{CN-PC} and \eqref{BDF2-PC} are unconditionally energy stable, i.e. they possess the energy decay property, respectively, $F^{n+1}\leq F^n,~ \forall n\geq 1$ and $F^{n+\frac{3}{2}}\leq F^{n+\frac{1}{2}},~ \forall n\geq 1.$ Therefore, they are unconditionally energy stable. In addition, Scheme \ref{scheme:PC} preserves the energy dissipation rate.
\end{rem}

\subsection{Arbitrarily high-order, unconditionally energy stable schemes}

The linear prediction-correction semi-discrete schemes discussed above are at most  second-order accurate in time. To derive arbitrarily high-order linear schemes, one can combine high-order backward differentiation formula with the matched extrapolation. Define the backward  difference operator for the $k$th order derivative as follows:
\beq
\Lambda_k \Psi^{n+1} = \sum_{i=0}^k \lambda_i^k \Psi^{n+1-i},
\eeq
where $\Lambda_0 \Psi^{n+1} :=\Psi^{n+1}$.  The values of $\{\lambda_i^k\}$ could be found in \cite{HairerBook}.
Then, we  propose the linear-implicit BDF-$k$  scheme (where  $k$ is the order of the scheme)
\begin{scheme}[BDF-$k$ Scheme]  \label{scheme:GradientFlow-BDFK}
Given $\Psi^{n+1-k}, \Psi^{n+2-k}, \cdots, \Psi^n$, we obtain $\Psi^{n+1}$ using
\beq
\frac{1}{\Delta t} \Lambda_k \Psi^{n+1}  = \cN(\overline{\Psi}^{n+1}) \cB \Psi^{n+1},
\eeq
where $\overline{(\bullet)}^{n+1}$ is a matched extrapolation with values from previous time steps.
\end{scheme}

Unfortunately, we are not able to prove energy stability for Scheme \ref{scheme:GradientFlow-BDFK} presently, although in practice, Scheme \ref{scheme:GradientFlow-BDFK} is usually shown to deliver an energy decay numerical result with reasonable time steps.

Given that we can't prove energy stability for the high order schemes obtained using the BDF method in time, we turn to another time discretization strategy for developing energy-stable schemes in time. Starting from the reformulated EQ system \eqref{eq:gradient-flow-EQ-full}, we apply the Gaussian collocation methods to construct arbitrarily high-order schemes in time. Then, we can prove the obtained schemes are unconditionally energy stable rigorously.

Recall the energy-quadratized system \eqref{eq:gradient-flow-EQ-full}, reformulated from the general gradient flow model \eqref{eq:gradient-flow}, as follows
\beq\label{eq:DE}
\frac{\partial }{\partial t}\Psi =\cN(\Psi) \cB \Psi.
\eeq
Firstly, we briefly recall the Runge-Kutta (RK) and collocation method (see Chapter \uppercase\expandafter{\romannumeral2} of \cite{HairerBook} for detailed discussions).
Applying an $s$-stage RK method to solve equation \eqref{eq:DE}, we obtain the following high-order energy quadratization (HEQ) scheme.

\begin{scheme}[$s$-stage HEQ-RK Method] \label{scheme:RK-method}
Let $b_i$, $a_{ij}$ ($i,j = 1,\cdots,s$) be real numbers and let $c_i = \sum\limits_{j=1}^s a_{ij}$.
Given $\Psi^n$, $\Psi^{n+1}$ is calculated by
\beq\label{HEQRK}
\bea{l}
k_i =  \cN \Big(\Psi^n+ \Delta t \sum\limits_{j=1}^s a_{ij} k_j \Big) \cB \Big( \Psi^n+ \Delta t \sum\limits_{j=1}^s a_{ij} k_j \Big), \quad i=1,\cdots,s, \\
\Psi^{n+1}  =  \Psi^n  + \Delta t \sum\limits_{i=1}^s b_i k_i.
\eea
\eeq
\end{scheme}
\noindent The coefficients are given by a Butcher table
\[
\begin{array}
{c|c}
\mathbf{c} & \mathbf{A} \\
\hline
&  \mathbf{b}^T\\
\end{array}
=
\begin{array}
{c|ccc}
c_1 & a_{11} & \cdots & a_{1s}  \\
\vdots & \vdots  & & \vdots \\
c_s & a_{s1} & \cdots & a_{ss} \\
\hline
& b_1 &  \cdots & b_s \\
\end{array},
\]
where $\mathbf{A} \in \mathbb{R}^{s,s}$, $\mathbf{b} \in \mathbb{R}^s$, and $\mathbf{c}=\mathbf{A} \mathbf{l}$, with $\mathbf{l}=(1,1,\cdots,1)^T \in \mathbb{R}^s$. Applying an $s$-stage collocation method to \eqref{eq:DE}, we obtain the following scheme.
\begin{scheme} [$s$-stage HEQ Collocation Method] \label{eq:Collocation-Scheme}
Let $c_1,\cdots, c_s$ be distinct real numbers ($0 \leq c_i \leq 1$). Given $\Psi^n$, the collocation polynomial $u(t)$ is a polynomial of degree $s$ satisfying
\beq
\bea{l}
u(t_n) = \Psi^n, \\\\
\dot{u}(t_n+c_i \Delta t) = \cN \big(u(t_n+c_i \Delta t)\big) \cB \big(u(t_n+c_i \Delta t)\big), \quad i=1,\cdots, s,
\eea
\eeq
and the numerical solution is defined by $\Psi^{n+1} = u(t_n+\Delta t)$.
\end{scheme}
Theorem 1.4 on page 31 of \cite{HairerBook} indicates that the collocation method yields a special RK method. If the collocation points $c_1,\cdots,c_s$ are chosen as the Gaussian quadrature nodes, i.e., the zeros of the $s$-th shifted Legendre polynomial $\frac{d^s}{dx^s} \Big(  x^s (x-1)^s \Big),$  Scheme \ref{eq:Collocation-Scheme} is called the Gaussian collocation method. Based on the Gaussian quadrature nodes, the interpolating quadrature formula has order $2s$, and the Gaussian collocation method shares the same order $2s$. Collocation points for Gaussian collocation methods of order 4 and 6 are given explicitly in \cite{HairerBook}.

For conservative systems with quadratic invariants, the Gaussian collocation methods have been proven to conserve the corresponding discrete quadratic invariants. For more details, please refer to the book \cite{HairerBook}. Applying the theory to our reformulated EQ system \eqref{eq:gradient-flow-EQ-full}, we have the following theorem.
\begin{thm} \label{thm:High-EQ}
The $s$-stage HEQ Gaussian collocation Scheme \ref{eq:Collocation-Scheme} is unconditionally energy stable, i.e., it satisfies the following energy dissipation law
\beq\label{eq:High-EL}
F^{n+1}-F^n = \Delta t \sum_{i=1}^s b_i \Big(  \cB u(t_n+c_i \Delta t), \cN\big(u(t_n+c_i \Delta t)\big) \cB u(t_n+c_i \Delta t) \Big)\leq 0,
\eeq
where $F^{n} = \frac{1}{2}\| \Psi^{n}\|_{\cB}^2 - A,$ $c_i$ ($i=1,\cdots,s$) is the Gaussian quadrature nodes, $b_i\geq 0$ ($i=1,\cdots,s$) are the Gauss-Legendre quadrature weights, $u(t)$ be the collocation polynomial of the Gaussian collocation methods.
\end{thm}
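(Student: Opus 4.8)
The plan is to exploit two structural facts: the transformed free energy $\cF = \frac{1}{2}\|\Psi\|_{\cB}^2 - A$ is a \emph{quadratic} functional of $\Psi$, and the $s$-point Gauss--Legendre quadrature underlying the collocation scheme integrates polynomials of degree up to $2s-1$ exactly. First I would express the energy increment along the collocation polynomial $u(t)$ of degree $s$ (which satisfies $u(t_n)=\Psi^n$, $u(t_n+\Delta t)=\Psi^{n+1}$). Since the constant $-A$ cancels and $\cB$ is self-adjoint and independent of $t$, the fundamental theorem of calculus gives
\[
F^{n+1}-F^n = \int_{t_n}^{t_n+\Delta t}\frac{d}{dt}\Big(\tfrac{1}{2}(u(t),\cB u(t))\Big)\,dt
= \int_{t_n}^{t_n+\Delta t}\big(\cB u(t),\dot u(t)\big)\,dt,
\]
where I used $(u,\cB\dot u)=(\cB u,\dot u)=(\dot u,\cB u)$ from the symmetry of the inner product and the self-adjointness of $\cB$. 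This reduces the whole problem to evaluating one time integral of the scalar map $t\mapsto(\cB u(t),\dot u(t))$.

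The key observation is a degree count. Because $u$ is a degree-$s$ polynomial in $t$ and $\cB$ is a \emph{time-independent} linear operator, $\cB u(t)$ is again a polynomial of degree $s$ in $t$ (its time-coefficients simply being $\cB$ applied to the coefficients of $u$), while $\dot u(t)$ has degree $s-1$. Hence the integrand $(\cB u(t),\dot u(t))$ is a scalar polynomial in $t$ of degree at most $2s-1$. Since the $s$-node Gaussian quadrature with nodes $c_i$ and weights $b_i$ is exact on such polynomials, the integral equals its quadrature sum exactly,
\[
F^{n+1}-F^n = \Delta t\sum_{i=1}^s b_i\big(\cB u(t_n+c_i\Delta t),\dot u(t_n+c_i\Delta t)\big).
\]
I would then substitute the collocation conditions $\dot u(t_n+c_i\Delta t)=\cN\big(u(t_n+c_i\Delta t)\big)\cB\, u(t_n+c_i\Delta t)$ into each summand, which reproduces exactly the identity \eqref{eq:High-EL}.

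The inequality then follows immediately: each summand has the form $(\cB u_i,\cN(u_i)\cB u_i)$ with $u_i=u(t_n+c_i\Delta t)$, which is nonpositive because $\cN(\Psi)$ is negative semi-definite for every $\Psi$, and the Gauss--Legendre weights satisfy $b_i\geq 0$; summing yields $F^{n+1}\leq F^n$. An equivalent, purely algebraic route would bypass the collocation polynomial: writing $\Psi^{n+1}=\Psi^n+\Delta t\sum_i b_i k_i$ with internal stages $U_i=\Psi^n+\Delta t\sum_j a_{ij}k_j$ and $k_i=\cN(U_i)\cB U_i$, one expands $\|\Psi^{n+1}\|_{\cB}^2-\|\Psi^n\|_{\cB}^2$, eliminates $\Psi^n$ in favor of the $U_i$, and uses the symplectic relation $b_i a_{ij}+b_j a_{ji}=b_i b_j$ satisfied by Gaussian collocation to cancel the cross terms quadratic in $\Delta t$, leaving precisely $2\Delta t\sum_i b_i(\cB U_i,\cN(U_i)\cB U_i)$. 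I expect the main obstacle to be exactly the degree-counting/quadrature-exactness step (equivalently, verifying the symplectic relation): it is the only place where the Gaussian choice of nodes is essential, and one must confirm that treating $\cB$ as a time-independent linear operator leaves the polynomial-in-time structure of the integrand intact. The remaining manipulations are bookkeeping with the self-adjointness of $\cB$ and the semi-definiteness of $\cN$.
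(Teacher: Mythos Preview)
Your proposal is correct and follows essentially the same route as the paper's proof: write $F^{n+1}-F^n$ via the fundamental theorem of calculus as $\int_{t_n}^{t_{n+1}}(\dot u(t),\cB u(t))\,dt$, observe the integrand is a polynomial in $t$ of degree $2s-1$ so that $s$-point Gaussian quadrature is exact, and then substitute the collocation conditions and use the semi-definiteness of $\cN$ together with $b_i\ge 0$. The alternative algebraic route you sketch (expanding $\|\Psi^{n+1}\|_{\cB}^2$ and invoking $b_i a_{ij}+b_j a_{ji}=b_ib_j$) is precisely the argument the paper gives separately for the more general RK result in the subsequent theorem.
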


\begin{proof}
Denoting $\Psi^n = u(t_n)$ and $\Psi^{n+1} = u(t_{n+1}),$ we have
\ben
F^{n+1}-F^n &=& \frac{1}{2}\| \Psi^{n+1}\|_{\cB}^2 - \frac{1}{2}\| \Psi^{n}\|_{\cB}^2 = \frac{1}{2}\| u(t_{n+1})\|_{\cB}^2 - \frac{1}{2}\| u(t_{n})\|_{\cB}^2\nonumber\\
&=& \int_{t_n}^{t_{n+1}} \frac{1}{2}\frac{d}{d t}\| u(t)\|_{\cB}^2 dt = \int_{t_n}^{t_{n+1}} \Big(\dot{u}(t),\cB u(t)\Big)d t,
\een
where the self-adjoint property of $\cB$ was used. The integrand $\Big(\dot{u}(t),\cB u(t)\Big)$ is a polynomial of degree $2s-1$, which is integrated without error by the $s$-stage Gaussian quadrature formula. It therefore follows from the collocation condition that
\beq
\int_{t_n}^{t_{n+1}} \Big(\dot{u}(t),\cB u(t)\Big)d t = \Delta t \sum_{i=1}^s b_i \Big(  \cB u(t_n+c_i \Delta t), \cN\big(u(t_n+c_i \Delta t)\big) \cB u(t_n+c_i \Delta t) \Big),
\eeq
which leads to \eqref{eq:High-EL}. This completes the proof.
\end{proof}

For general RK methods, we have the following theorem.
\begin{thm} \label{thm:High-EQ-RK}
If the coefficients of a HEQ-RK method satisfy
\beq\label{eq:RK-stable-condition}
b_i a_{i j} + b_j a_{j i} = b_i b_j,\quad b_i\geq 0,\quad \forall~i,j = 1,\cdots,s,
\eeq
then it is unconditionally energy stable, i.e., it satisfies the following energy dissipation law
\beq\label{eq:High-EL-RK}
F^{n+1}-F^n = \Delta t\sum_{i=1}^s b_i \Big(  \cB \Psi_i, \cN(\Psi_i) \cB \Psi_i \Big)\leq 0,
\eeq
where $F^{n} = \frac{1}{2}\| \Psi^{n}\|_{\cB}^2 - A,$ $\Psi_i = \Psi^{n}+\Delta t\sum\limits_{j=1}^s a_{ij} k_j.$
\end{thm}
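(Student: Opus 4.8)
The plan is to adapt the classical argument of Cooper, which shows that Runge-Kutta methods obeying the relation \eqref{eq:RK-stable-condition} preserve quadratic invariants, to the present dissipative setting where $\cN$ is only negative semi-definite rather than skew-adjoint. The target is the exact energy increment $F^{n+1}-F^n = \frac{1}{2}\big(\|\Psi^{n+1}\|_{\cB}^2 - \|\Psi^n\|_{\cB}^2\big)$, and the whole computation will be powered by the self-adjointness of $\cB$ together with the update rule $\Psi^{n+1} = \Psi^n + \Delta t \sum_i b_i k_i$. First I would expand the square: writing $\|\Psi^{n+1}\|_{\cB}^2 = \big(\Psi^n + \Delta t \sum_i b_i k_i,\ \cB(\Psi^n + \Delta t \sum_j b_j k_j)\big)$ and using $(\Psi^n, \cB k_j) = (k_j, \cB\Psi^n)$ to merge the two cross terms, I expect to arrive at
\beq
F^{n+1}-F^n = \Delta t \sum_{i=1}^s b_i (k_i, \cB\Psi^n) + \frac{\Delta t^2}{2}\sum_{i,j=1}^s b_i b_j (k_i, \cB k_j).
\eeq

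Since the desired identity \eqref{eq:High-EL-RK} is phrased in terms of the stage values, the next step is to eliminate $\Psi^n$ in favor of $\Psi_i = \Psi^n + \Delta t \sum_j a_{ij} k_j$. Substituting $\Psi^n = \Psi_i - \Delta t \sum_j a_{ij} k_j$ into the first sum and collecting the $O(\Delta t^2)$ contributions should yield
\beq
F^{n+1}-F^n = \Delta t \sum_{i=1}^s b_i (k_i, \cB\Psi_i) + \Delta t^2 \sum_{i,j=1}^s \Big(\tfrac{1}{2}b_i b_j - b_i a_{ij}\Big)(k_i, \cB k_j).
\eeq

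The crux of the argument, and the step I expect to be the main obstacle, is showing that the entire $O(\Delta t^2)$ double sum vanishes. The key observation is that the array $(k_i, \cB k_j)$ is symmetric in $i$ and $j$, because $\cB$ is self-adjoint and the inner product is symmetric. This symmetry lets me replace the coefficient $\tfrac{1}{2}b_i b_j - b_i a_{ij}$ by its symmetrization $\tfrac{1}{2}\big(b_i b_j - b_i a_{ij} - b_j a_{ji}\big)$ without altering the contracted sum, and the hypothesis \eqref{eq:RK-stable-condition} makes this symmetrized coefficient identically zero. The only delicate point is to perform the symmetrization cleanly, keeping careful track of which index is paired against $\cB$ so that the cancellation is exact rather than merely to leading order; this is precisely where the structural condition on the Butcher tableau is used.

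Once the quadratic terms are gone, the conclusion is immediate. Using $k_i = \cN(\Psi_i)\cB\Psi_i$ and the symmetry of the inner product gives
\beq
F^{n+1}-F^n = \Delta t \sum_{i=1}^s b_i \big(\cB\Psi_i, \cN(\Psi_i)\cB\Psi_i\big),
\eeq
which is exactly \eqref{eq:High-EL-RK}. Because $\cN(\Psi_i)$ is negative semi-definite for every argument $\Psi_i$ and each weight satisfies $b_i \geq 0$, every summand is nonpositive, so $F^{n+1} \leq F^n$ for any step size $\Delta t$, establishing unconditional energy stability.
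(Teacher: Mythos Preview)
Your proposal is correct and follows essentially the same approach as the paper: expand $\|\Psi^{n+1}\|_{\cB}^2$ using the update rule, substitute $\Psi^n = \Psi_i - \Delta t\sum_j a_{ij}k_j$, and invoke condition \eqref{eq:RK-stable-condition} together with the self-adjointness of $\cB$ to kill the $O(\Delta t^2)$ double sum. The only cosmetic difference is that you make the symmetrization of $(k_i,\cB k_j)$ explicit before applying \eqref{eq:RK-stable-condition}, whereas the paper writes the symmetrized coefficient $\tfrac{1}{2}(b_ib_j - b_ia_{ij} - b_ja_{ji})$ directly.
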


\begin{proof}
Denoting $\Psi^{n+1}  =  \Psi^n  + \Delta t \sum\limits_{i=1}^s b_i k_i$ and noticing that operator $\cB$ is linear and self-adjoint, we have
\beq
\frac{1}{2}\| \Psi^{n+1}\|_{\cB}^2 = \frac{1}{2}\| \Psi^{n}\|_{\cB}^2 + \Delta t\sum\limits_{i=1}^s b_i(k_i,\cB\Psi^{n}) +\frac{\Delta t ^2}{2}\sum\limits_{i,j=1}^s b_i b_j (k_i,\cB k_j),
\eeq
which implies
\beq\label{eq:RK-ES-t-1}
F^{n+1} - F^n = \Delta t\sum\limits_{i=1}^s b_i(k_i,\cB\Psi^{n}) +\frac{\Delta t ^2}{2}\sum\limits_{i,j=1}^s b_i b_j (k_i,\cB k_j).
\eeq
Applying $\Psi^{n} = \Psi_i - \Delta t\sum\limits_{j=1}^s a_{ij} k_j$ to \eqref{eq:RK-ES-t-1}, we obtain
\beq\label{eq:RK-ES-t-2}
F^{n+1} - F^n = \Delta t\sum\limits_{i=1}^s b_i(k_i,\cB\Psi_i) +\frac{\Delta t ^2}{2}\sum\limits_{i,j=1}^s (b_i b_j - b_i a_{i j} - b_j a_{j i}) (k_i,\cB k_j).
\eeq
Combining \eqref{eq:RK-stable-condition}, \eqref{eq:RK-ES-t-2} and $k_i = \cN(\Psi_i)\cB\Psi_i$, we obtain \eqref{eq:High-EL-RK}. This completes the proof.
\end{proof}

\begin{rem}
Due to the collocation method reduces to a special RK method, we have to solve the nonlinear system \eqref{HEQRK}, which will be implemented by using the following simple fixed-point iteration method. Denote $\cN(\Psi) = \cN_1 + \cN_2(\Psi)$, where $\cN_1$ and $\cN_2(\Psi)$ are the linear part and nonlinear part of $\cN(\Psi)$, respectively. At time step $n$, the nonlinear system for $k_i$ in \eqref{HEQRK} is first computed by
\beq\label{interation-eq}
k_i^{r+1} =  \cN_1 \cB \Big( \Psi^n+ \Delta t \sum\limits_{j=1}^s a_{ij} k_j^{r+1} \Big) + \cN_2 \Big(\Psi^n+ \Delta t \sum\limits_{j=1}^s a_{ij} k_j^{r} \Big) \cB \Big( \Psi^n+ \Delta t \sum\limits_{j=1}^s a_{ij} k_j^{r} \Big), \quad i=1,\cdots,s,
\eeq
where we take the initial iteration $k_i^0 = 0$ for simplicity. We iterate the solution until the following criteria is satisfied
\beq
\max_{i}\|k_i^{r+1}-k_i^r\|_{\infty} < 10^{-12}.
\eeq
We note that the FFT algorithm is applied for solving the linear equation system \eqref{interation-eq}. Then we obtain $\Psi^{n+1}$ by \eqref{HEQRK}.
\end{rem}

\begin{rem}
Though the energy \eqref{initial-energy} and \eqref{EQ-energy} are equivalent in the continuum form, the  proposed schemes only satisfy a discrete energy dissipation law in term of the reformulated energy \eqref{EQ-energy}, instead of the original energy \eqref{initial-energy}. However, we point out the discrete version of \eqref{EQ-energy} is a high-order approximation of \eqref{EQ-energy}, i.e. \eqref{initial-energy}. In addition, the stabilization technique is introduced to inherit the stability of the original energy as much as possible. For instance, in the Ginzburg-Landau free energy \eqref{Ginzburg-Landau-energy}, the modified energy is $F = \frac{\varepsilon^2}{2}\|\nabla \phi\|^2 + \frac{\gamma_0}{2}\|\phi\|^2 + \frac{1}{2}\|q\|^2 - A$, which implies $H^1$ boundedness of phase variable $\phi$. More details, please refer to \cite{Chen&Zhao&Yang2018,SAV-2}.
\end{rem}

\begin{rem}
We note that the EQ approach is employed for developing second-order linear or arbitrarily high-order schemes in this paper, for respecting the energy-dissipation law. For the gradient flow models equipped with singular energy potentials, such as the Cahn-Hilliard equation with the Flory-Huggins free energy, we can't prove that the proposed methods preserve the positivity. For some seminal work of the positive-preserving algorithms for gradient flow models, please refer to \cite{ChenetalPositivity}.
\end{rem}

\begin{rem}
For gradient flow models,  a class of high-order convex splitting Runge-Kutta schemes have been developed recently \cite{RKJCP2017}. However, these schemes only work when one can find a convex splitting of the free energy, which may not always be true. Besides, they require too many multi-stages to reach even the 3rd order accuracy. The existence of higher order convex splitting RK coefficients is not guaranteed. Moreover, they don't work for the case where the mobility is variable. The HEQ schemes introduced in this paper do not have these constraints so that they can be applied to a much broader class of problems.
\end{rem}

\section{Spatial discretization}\label{sec:spaceDis}
Next, we present structure-preserving spacial discretizations for the quadratized gradient flow models. For the spatial discretization, one idea inspired by \cite{Dahlby2011A} is to preserve the negative semi-definite property of operator $\cN(\Psi)$ and the self-adjoint, positive definite property of operator $\cB$. Another idea is to develop spatial discretization methods that preserve the discrete integration-by-parts formulae (please see \cite{Gong2016Fully, Gong&Zhao&WangACM, Gong2018Second, Gong&Zhao&Yang&WangSISC, Gong&Zhao&WangCPC, Zhao&Yang&Gong&WangCMAME2017} for details). Based on these ideas, we apply the Fourier pseudospectral method in space to \eqref{eq:gradient-flow-EQ-full}, which leads to an ODE system that preserves the spatial semi-discrete energy dissipation law. Then we apply the methods discussed in the previous section to the ODE system to obtain fully discrete energy stable schemes.

Let $N_x,N_y$ be two positive even integers. The spatial domain $\Omega = [0,L_x]\times[0,L_y]$ is uniformly partitioned with mesh size $h_x =
L_{x}/N_{x},h_y = L_{y}/N_{y}$ and $$\Omega_{h} =
\left\{(x_{j},y_{k})|x_{j} = j h_x,y_{k} = kh_y,~0\leq j\leq
N_{x}-1,0\leq k\leq N_{y}-1\right\}.$$
We define
$S_{N} = \textrm{span}\{ X_{j}(x) Y_{k}(y), j =
0,1,\ldots,N_{x}-1; k = 0,1,\ldots,N_{y}-1\}$ as the interpolation space, where $X_{j}(x)$ and $Y_{k}(y)$ are
trigonometric polynomials of degree $N_{x}/2$ and $N_{y}/2$, given
respectively by
\beq
X_{j}(x) = \frac{1}{N_{x}}\sum\limits_{m =
-N_{x}/2}^{N_{x}/2}{\frac{1}{a_{m}}e^{im\mu_{x}(x-x_{j})}},\quad
Y_{k}(y) = \frac{1}{N_{y}}\sum\limits_{m =
-N_{y}/2}^{N_{y}/2}{\frac{1}{b_{m}}e^{im\mu_{y}(y-y_{k})}},
\eeq
where $a_{m} =
\begin{cases}
1, |m|<N_{x}/2,\\
2, |m|=N_{x}/2,
\end{cases} \mu_{x} = 2\pi/L_{x},~
b_{m} =
\begin{cases}
1, |m|<N_{y}/2,\\
2, |m|=N_{y}/2,
\end{cases} \mu_{y} = 2\pi/L_{y}.
$ We define the interpolation operator $I_{N}:C(\Omega)\rightarrow
S_{N}$ as follows:
\begin{equation}\label{interpolation}
I_{N}u(x,y) = \sum\limits_{j = 0}^{N_{x}-1}\sum\limits_{k
= 0}^{N_{y}-1} u_{j,k} X_{j}(x) Y_{k}(y),
\end{equation}
where $u_{j,k} = u(x_{j},y_{k})$. The key of spatial Fourier pseudospectral discretization is to obtain derivative $\partial_{x}^{s_{1}}\partial_{y}^{s_{2}}I_{N}u(x,y)$ at
collocation points. Then, we differentiate \eqref{interpolation} and evaluate the resulting expressions at point $(x_{j},y_{k})$ as follows
\begin{equation*}
\partial_{x}^{s_{1}}\partial_{y}^{s_{2}}I_{N}u(x_{j},y_{k}) = \sum\limits_{m_1 =
0}^{N_{x}-1}\sum\limits_{m_2
= 0}^{N_{y}-1} u_{m_1,m_2} (\bD_{s_{1}}^{x})_{j,m_1}
(\bD_{s_{2}}^{y})_{k,m_2},
\end{equation*}
where $\bD_{s_{1}}^{x}$ and $\bD_{s_{2}}^{y}$ are $N_{x} \times N_{x}$ and $N_{y} \times N_{y}$ matrices, respectively, with elements given by
\begin{equation*}
(\bD_{s_{1}}^{x})_{j,m} = \frac{d^{s_{1}}X_{m}(x_{j})}{dx^{s_{1}}},~(\bD_{s_{2}}^{y})_{k,m}
=\frac{d^{s_{2}}Y_{m}(y_{k})}{dy^{s_{2}}}.
\end{equation*}
Here we note that the Fourier pseudospectral method preserves discrete integration-by-parts formulae. For more details, please refer to our previous work \cite{Gong&Zhao&WangACM}.

Applying the Fourier pseudospectral method to \eqref{eq:gradient-flow-EQ-full}, we obtain
\beq \label{spaceDis}
\frac{d}{d t} \Psi = \cN_d(\Psi) \cB_d \Psi,
\eeq
where
$\cN_d(\Psi)$ is a discrete negative semi-definite operate that approximates to $\cN(\Psi)$, $\cB_d$ is a constant self-adjoint, positive definite operate that approximates to $\cB.$ It is readily to show that the system \eqref{spaceDis} possesses the discrete energy dissipation law
\beq
\frac{d}{d t} F_h = \left( \cB_d \Psi , \frac{d}{d t}\Psi\right)_h = \big(\cB_d \Psi , \cN_d(\Psi)  \cB_d\Psi \big)_h \leq 0,
\eeq
where $F_h = \frac{1}{2}\left(\Psi, \cB_d \Psi\right)_h-A,$ $(\cdot,\cdot)_h$ is the corresponding discrete inner product. Then the linear prediction-correction schemes and Gaussian collocation methods proposed in Section \ref{sec:timeDis} can be applied directly for \eqref{spaceDis} to obtain fully discrete energy stable schemes.

Next, we specifically apply our spatial discretization method to two examples. Firstly, we consider the following Cahn-Hilliard equation
\beq\label{CH}
\partial_t \phi = \lambda\Delta\frac{\delta F}{\delta\phi},
\eeq
where the free energy functional $F$ is given by
\beq\label{CH-energy}
F = \left(\frac{1}{4}\phi^4 - \frac{a}{2}\phi^2,1\right) + \frac{b}{2}\|\nabla\phi\|^2 + \frac{c}{2}\big(\|\phi\|^2 - 2\|\nabla\phi\|^2 + \|\Delta\phi\|^2\big).
\eeq
By introducing $q = \frac{1}{\sqrt{2}}(\phi^2 - a - \gamma_0),$ the free energy can be rewritten as
\beq\label{CH-EQenergy}
F = \frac{1}{2}\Big(\phi, \cL\phi\Big) + \frac{1}{2}\|q\|^2 - A + \textrm{boundary terms},
\eeq
where $\cL = \gamma_0 - b\Delta + c(1+\Delta)^2$ and $A = \frac{1}{4}(a+\gamma_0)^2|\Omega|.$ Assume the boundary conditions annihilate the boundary terms. Then Eq. \eqref{CH} reduces to the following system
\beq\label{CH-EQ-form}
\begin{cases}
\partial_t\phi = \lambda\Delta(\cL\phi+\sqrt{2}\phi q),\\
\partial_t q = \sqrt{2}\phi \partial_t\phi,
\end{cases}
\eeq
which can be written into the compact form \eqref{eq:gradient-flow-EQ-full} with $\Psi = (\phi, q)^T,$ $\cN(\Psi) = (1, \sqrt{2}\phi)^T \lambda \Delta (1, \sqrt{2}\phi)$ and $\cB = \textrm{diag}(\cL,1).$ Following the notations of Ref. \cite{Gong&Zhao&WangACM} and applying the Fourier pseudospectral method to \eqref{CH-EQ-form}, we obtain
\beq\label{CH-spaceDis}
\begin{cases}
\frac{d}{dt}\phi = \lambda\Delta_h(\cL_d \phi + \sqrt{2}\phi \odot q),\\
\frac{d}{dt} q = \sqrt{2}\phi \odot \frac{d}{dt}\phi,
\end{cases}
\eeq
where $\Delta_h = \bD_{2}^{x}\textcircled{x} + \bD_{2}^{y}\textcircled{y}$ and $\cL_d = \gamma_0 - b\Delta_h + c(1+\Delta_h)^2.$ System \eqref{CH-spaceDis} can be written into the form of \eqref{spaceDis} with discrete operators $\cN_d(\Psi) = (1, \sqrt{2}\phi\odot)^T \lambda \Delta_h (1, \sqrt{2}\phi\odot)$ and $\cB_d = \textrm{diag}(\cL_d,1).$

Next we consider the molecular beam epitaxy model
\beq\label{MBE}
\partial_t \phi = -\lambda\frac{\delta F}{\delta\phi},
\eeq
where the free energy functional $F$ is given by
\beq\label{MBE-energy}
F = \frac{\varepsilon^2}{2}\|\Delta\phi\|^2 + \frac{1}{4}\big\| |\nabla\phi|^2-1 \big\|^2.
\eeq
Letting $q = \frac{1}{\sqrt{2}}\big(|\nabla\phi|^2-1 - \gamma_0\big),$ we obtain the corresponding free energy
\beq\label{MBE-EQenergy}
F = \frac{1}{2}\Big(\phi, \cL\phi\Big) + \frac{1}{2}\|q\|^2 - A + \textrm{boundary terms},
\eeq
where $\cL = \varepsilon^2\Delta^2 - \gamma_0\Delta$ and $A = \frac{1}{4}(2\gamma_0+\gamma_0^2)|\Omega|.$ Assume the boundary conditions annihilates the boundary term. Then Eq. \eqref{MBE} is written equivalently as
\beq\label{MBE-EQ-form}
\begin{cases}
\partial_t\phi = -\lambda\Big(\cL\phi-\sqrt{2}\nabla\cdot(q\nabla\phi)\Big),\\
\partial_t q = \sqrt{2}\nabla\phi\cdot \nabla\partial_t\phi,
\end{cases}
\eeq
which can be written into the compact form of \eqref{eq:gradient-flow-EQ-full} with $\Psi = (\phi, q)^T,$ $\cN(\Psi) = -\lambda \left(\begin{array}{c}
1 \\
\sqrt{2}\nabla\phi\cdot\nabla
\end{array}
\right) (1, -\nabla\cdot \sqrt{2}\nabla\phi)$ and $\cB = \textrm{diag}(\cL,1).$
Applying the Fourier pseudospectral method for \eqref{MBE-EQ-form}, we obtain
\beq\label{MBE-spaceDis}
\begin{cases}
\frac{d}{dt} \phi = -\lambda\Big(\cL_d\phi - \sqrt{2}\bD_{1}^{x}\textcircled{x}\big(q\odot\bD_{1}^{x}\textcircled{x}\phi\big) - \sqrt{2}\bD_{1}^{y}\textcircled{y}\big(q\odot\bD_{1}^{y}\textcircled{y}\phi\big)\Big),\\
\frac{d}{dt} q = \sqrt{2} (\bD_{1}^{x}\textcircled{x}\phi)\odot (\bD_{1}^{x}\textcircled{x}\frac{d}{dt}\phi) + \sqrt{2} (\bD_{1}^{y}\textcircled{y}\phi)\odot (\bD_{1}^{y}\textcircled{y}\frac{d}{dt}\phi),
\end{cases}
\eeq
where $\cL_d = \varepsilon^2\Delta_h^2 - \gamma_0\Delta_h.$ System \eqref{MBE-spaceDis} can be written into the form of \eqref{spaceDis} with discrete operators $\cN_d(\Psi) = -\lambda
\left(\begin{array}{c}
1 \\
\sqrt{2} (\bD_{1}^{x}\textcircled{x}\phi)\odot \bD_{1}^{x}\textcircled{x} + \sqrt{2} (\bD_{1}^{y}\textcircled{y}\phi)\odot \bD_{1}^{y}\textcircled{y}
\end{array}
\right) \Big(1, -\sqrt{2}\bD_{1}^{x}\textcircled{x}\big(\bD_{1}^{x}\textcircled{x}\phi\big)\odot - \sqrt{2}\bD_{1}^{y}\textcircled{y}\big(\bD_{1}^{y}\textcircled{y}\phi\big)\odot\Big)$ and $\cB_d = \textrm{diag}(\cL_d,1).$

\section{Numerical Results}\label{sec:Numer}
In this section, we apply the proposed numerical schemes to several gradient flow problems.  For simplicity, we assume periodic boundary conditions in the numerical experiments. Note that our schemes can be easily applied to gradient flow models with physical boundary conditions as long as they annihilate the boundary terms in the energy dissipation rate formula.  Systematical comparisons among the prediction-correction scheme \ref{scheme:PC}, the BDF-k scheme \ref{scheme:GradientFlow-BDFK}, and HEQ schemes \ref{scheme:RK-method} and \ref{eq:Collocation-Scheme} are presented below.

We fix the number of iteration $N=5$ and $\varepsilon_0=10^{-12}$ in the rest of this paper. In addition, for easily referring the schemes, we name \eqref{CN-PC} with  predictor \eqref{CN-BDF-P0} as the LCN  (linear CN) scheme; \eqref{CN-PC} with  predictor \eqref{CN-P2} as the ICN (implicit CN) scheme; \eqref{BDF2-PC} with  predictor \eqref{CN-BDF-P0} as the LBDF2 scheme; \eqref{BDF2-PC} with predictor \eqref{BDF2-P2} as the IBDF2 scheme.


\textbf{Example 1: Cahn-Hilliard equation.} In the first example, we consider the Cahn-Hilliard equation with the  free energy functional containing a double-well bulk term:
\beq
F = \frac{\varepsilon^2}{2}\|\nabla \phi\|^2 + \frac{1}{4}\|\phi^2-1\|^2,
\eeq
where $\varepsilon$ is a small parameter.
The Cahn-Hilliard equation is given  as follows
\beq
\partial_t \phi = \lambda \Delta \Big( -\varepsilon^2 \Delta \phi + \phi^3 - \phi \Big),
\eeq
where $\lambda$ is the mobility parameter.
If we introduce the auxiliary variable $q =\frac{1}{\sqrt{2}} (\phi^2 -1 - \gamma_0)$, where $\gamma_0>0$ is a constant, we have
\beq
\left\{
\bea{l}
\partial_t \phi = \lambda \Delta ( -\varepsilon^2 \Delta \phi + \gamma_0 \phi + q g(\phi) ), \\
\partial_t q = g(\phi) \partial_t \phi, \quad g(\phi) = \sqrt{2} \phi.
\eea
\right.
\eeq
Then we denote $\Psi=(\phi, q)^T$ and rewrite the Cahn-Hilliard equation into a prototypical form
\beq
\partial_t \Psi = \cA^T \cG \cA \cB \Psi, \quad \cA = \big(1 , g(\phi)\big), \quad \cB = \textrm{diag}(-\varepsilon^2 \Delta +\gamma_0, 1), \quad \cG= \lambda \Delta.
\eeq

In the numerical experiment,  we choose a square domain with $\Omega =[0, 1]^2$, and the parameters are chosen as $\gamma_0=1$, $\varepsilon=0.01$ and $\lambda = 10^{-3}$.   Using   initial condition $\phi(x,y) = \sin (2\pi x) \sin (2\pi y)$, we carry out time-step refinement tests. By choosing the numerical solution calculated by the $6$th order Gaussian collocation method with time step $\Delta t = 10^{-5}$ as the ``exact"  solution, the errors are calculated as the differences between the numerical solutions at different time steps and the ``exact" solution, respectively.

The $\log$-$\log$ plots of $L^2$ errors against time step $\Delta t$ are depicted in Figure \ref{fig:CH-Error}. From Figure \ref{fig:CH-Error}(a), we observe that the HEQ-RK scheme \ref{scheme:RK-method} with the 4th and 6th order Gaussian collocation points reaches the 4th and 6th order of accuracy,  respectively.  From Figure \ref{fig:CH-Error}(b), we observe that both the LCN scheme and the ICN scheme reach the second order accuracy, while the ICN scheme reaches it at a larger time step. The ICN scheme has a significantly smaller error than the LCN scheme by comparing the errors of the two schemes at the same time step size, with similar CPU time cost, since the ICN/IBDF2 schemes only require up to 5 iterations, where each iteration is solved using an FFT solver. As shown in Figure \ref{fig:CH-Error}(c), the BDF schemes have similar results as the CN schemes do. Besides, the BDF-k schemes also reach the expected order of convergence when the time step is small enough, as shown in Figure \ref{fig:CH-Error}(d). Comparing between Figure \ref{fig:CH-Error}(a) and \ref{fig:CH-Error}(d), we observe the HEQ schemes are more accurate than the BDF-k schemes when using the same time steps.

\begin{figure}
\centering
\subfigure[HEQ-RK schemes]{\includegraphics[height=3.75cm,width=3.75cm,angle=0]{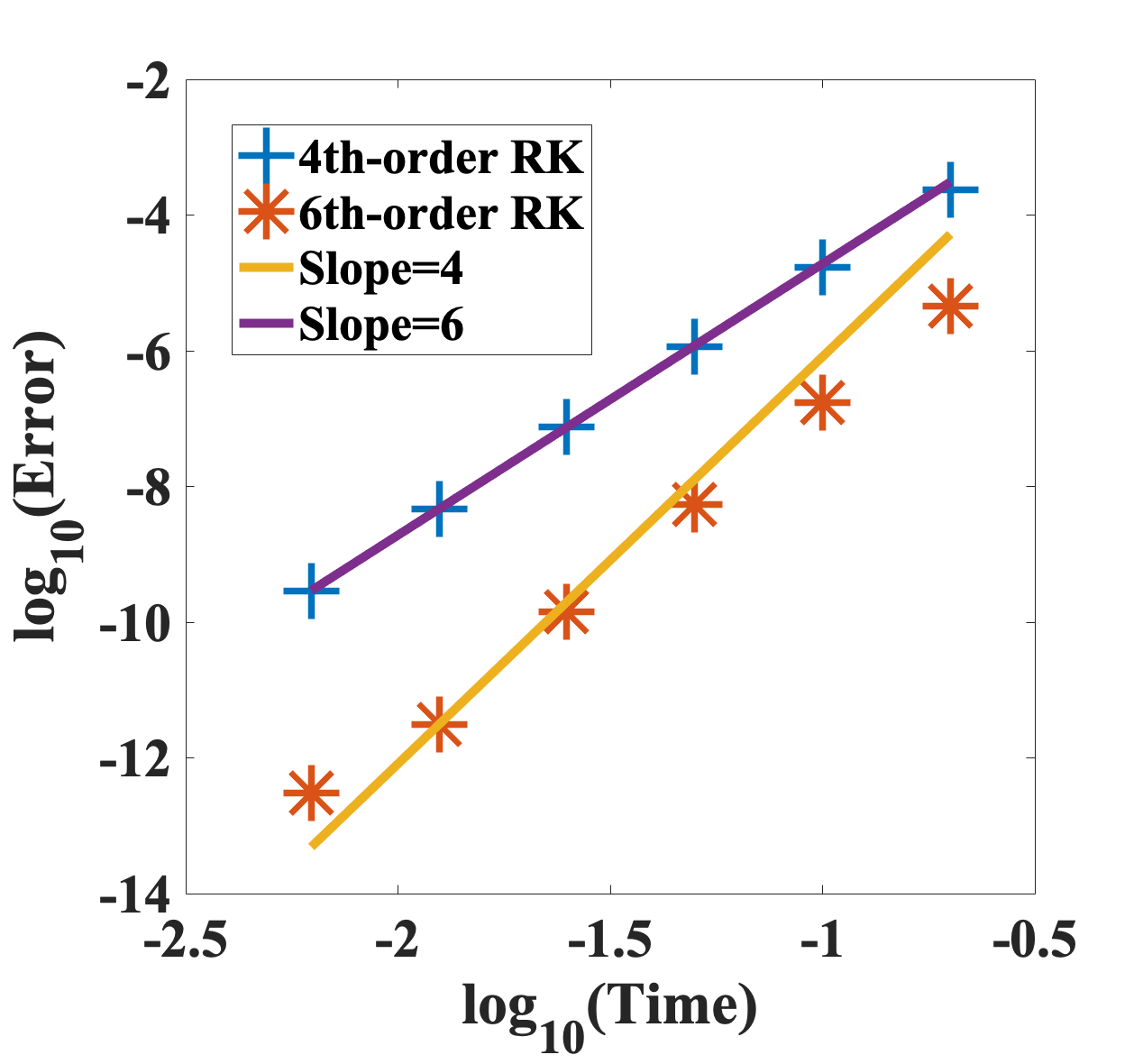}}
\subfigure[CN schemes]{\includegraphics[height=3.75cm,width=3.75cm,angle=0]{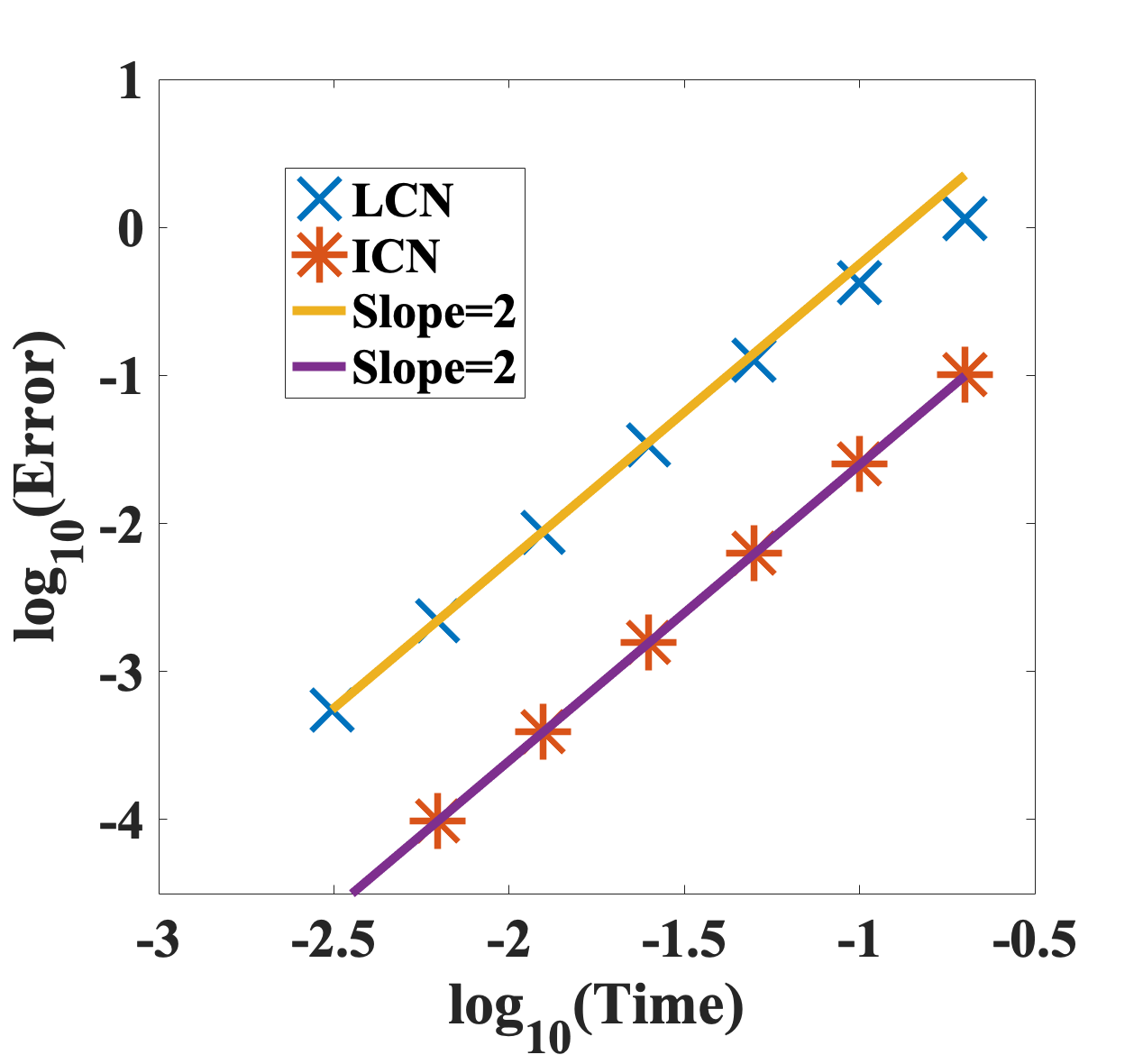}}
\subfigure[BDF2 schemes]{\includegraphics[height=3.75cm,width=3.75cm,angle=0]{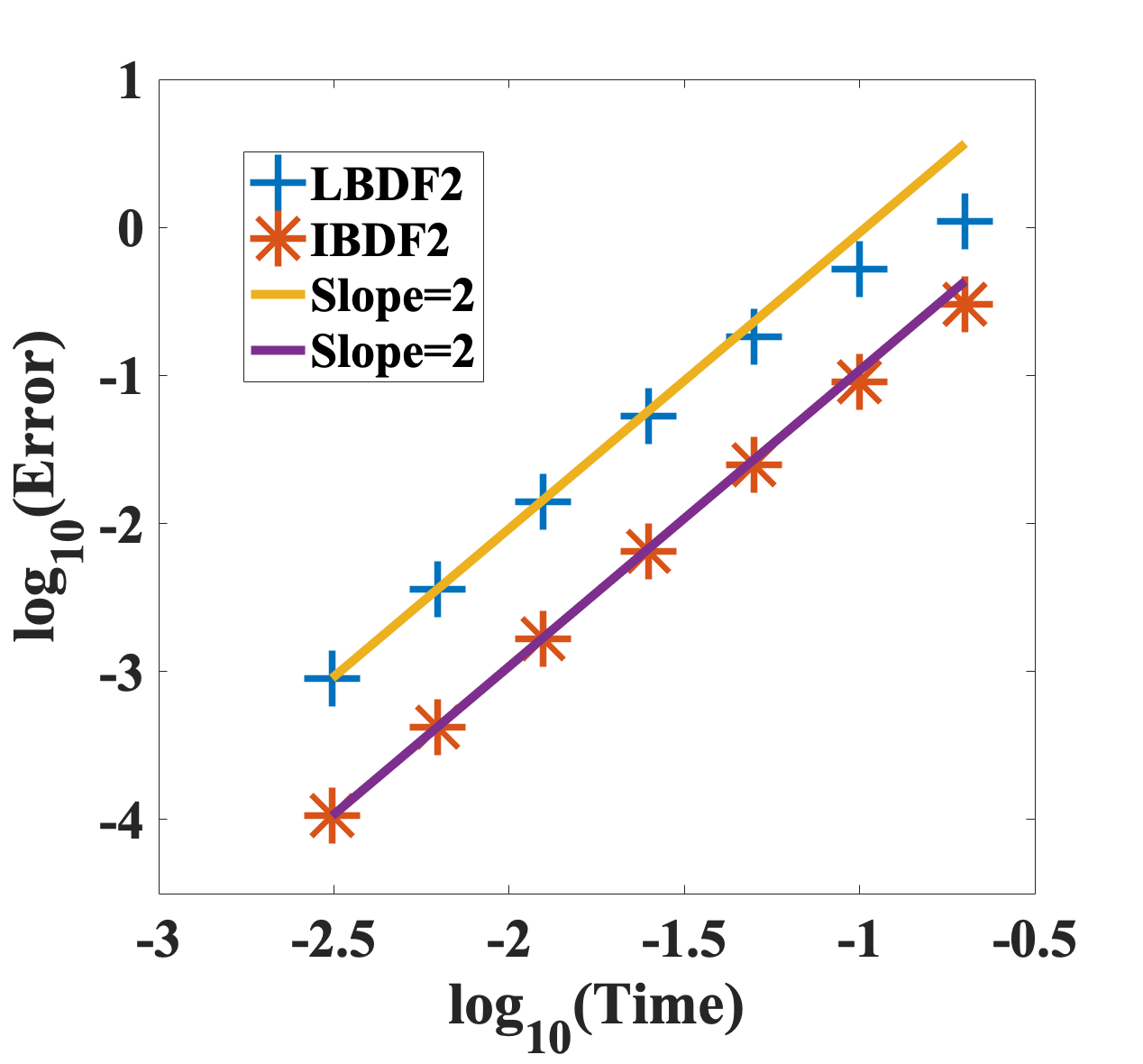}}
\subfigure[High-order BDF schemes]{\includegraphics[height=3.75cm,width=3.75cm,angle=0]{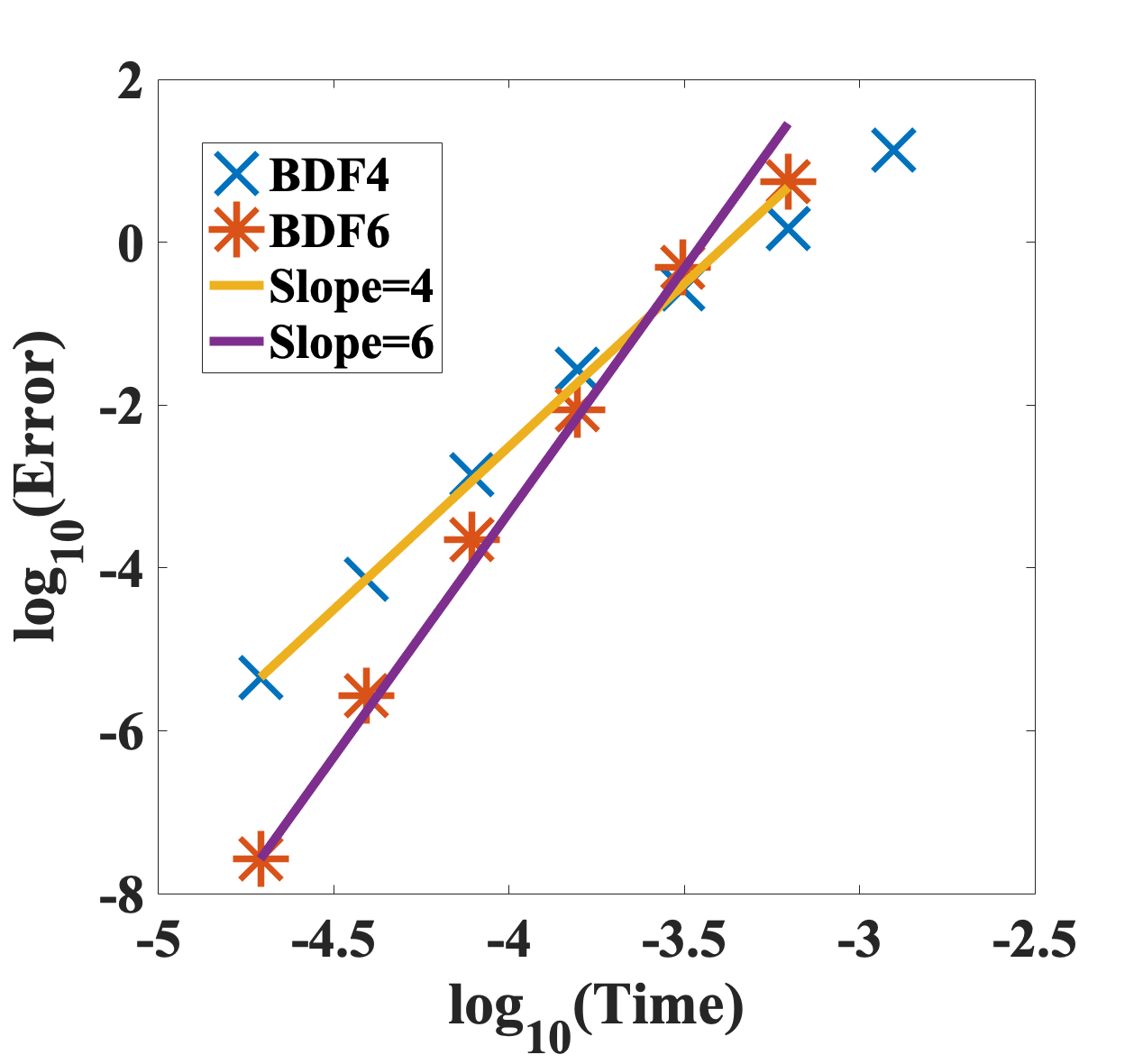}}
\caption{Rate of convergence in time. This figure shows some time-step refinement tests.  It provides the $\log$-$\log$ plots of numerical $L^2$ errors with respect to different time steps using  the RK or Gaussian collocation method, CN, BDF2, BDF4, BDF6 schemes. Here we use spatial meshes $256 \times 256$ and the error is calculated at $t=1$. It demonstrates the proposed schemes reach their expected order accuracy. }
\label{fig:CH-Error}
\end{figure}

Next, we study the coarsening dynamics, and compare the accuracy of the proposed schemes by comparing the numerically calculated energies using different schemes with various time-step sizes. To have a more detailed comparison, we consider coarsening dynamics of a binary mixture (which demonstrates a dramatic energy change when the system coarsens). We use $256 \times 256$ equal distanced meshes in space to discretize the domain $\Omega=[0 \,\,\, 4\pi] \times [0 \,\,\, 4\pi]$. The parameters are chosen as $\lambda=0.1$, $\epsilon=0.05$, $\gamma_0=1$, and initial condition is $\phi(x,y) = 0.001\big(2~ \textrm{rand}(x,y)-1\big)$.

In Figure \ref{fig:Coarsening-CN-Energy}(a-b), the energy computed using the CN schemes in time $[0 \,\,\, 2]$  are plotted. We observe that for the LCN scheme, when $\Delta t < 0.0125$, the calculated energy converges to the accurate energy. For ICN scheme, it has a dramatic improvement in accuracy, where the numerical energy calculated with $\Delta t=0.1$ is already very accurate, where the CPU time cost is negligible as only up to 5 times of an FFT solver is applied to  \eqref{CN-P2} in the computation in each step. As shown in Figure \ref{fig:Coarsening-CN-Energy}(c-d), the BDF4 and BDF6 schemes provide accurate results given the time step is small enough. However, when the time step is large, it violates the energy dissipation law. The results using the HEQ Gaussian collocation method of order four and order six  are summarized in Figure \ref{fig:Coarsening-CN-Energy}(e-f).  We observe that the HEQ Gaussian collocation scheme provides much better accuracy. Both 4th order and the 6th order schemes with a large time step size, such as $\Delta t  = 0.2$, give very accurate energy predictions, which is much larger than the proper time steps for the BDF4 and BDF6 schemes respectively.

\begin{figure}
\centering
\subfigure[LCN Scheme]{\includegraphics[height=4cm]{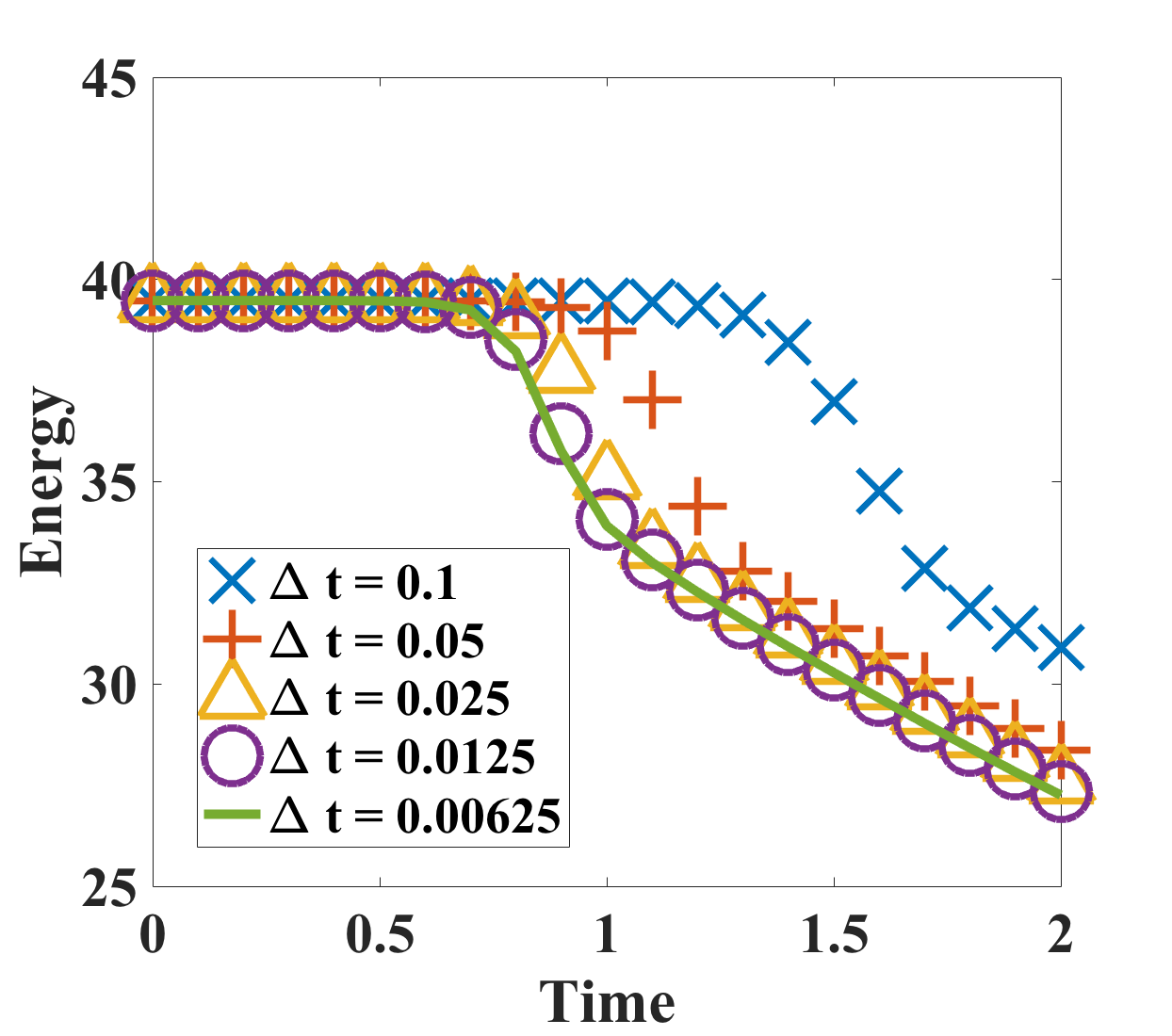}}
\subfigure[ICN Scheme]{\includegraphics[height=4cm]{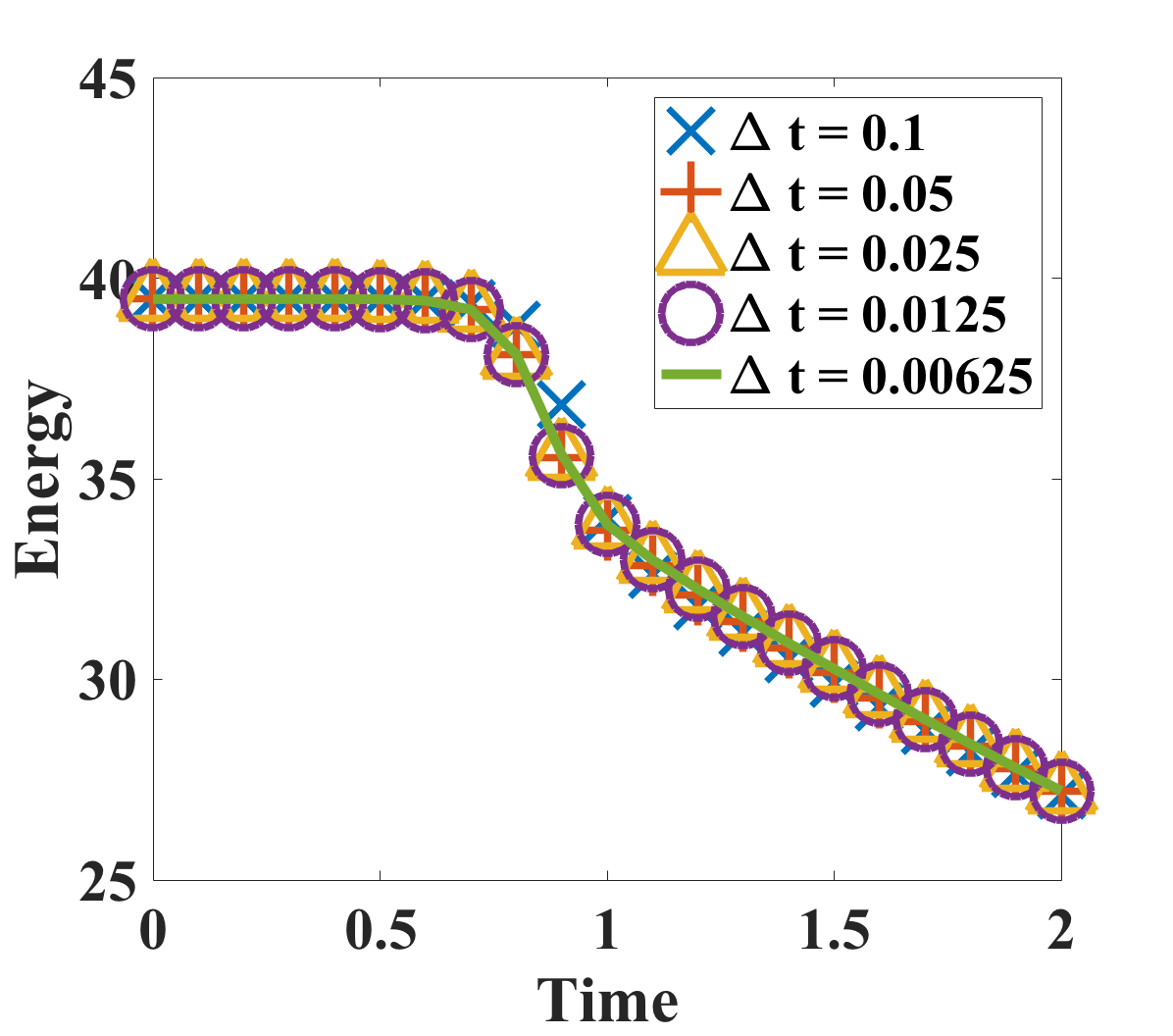}}
\subfigure[BDF4 Scheme]{\includegraphics[height=4cm]{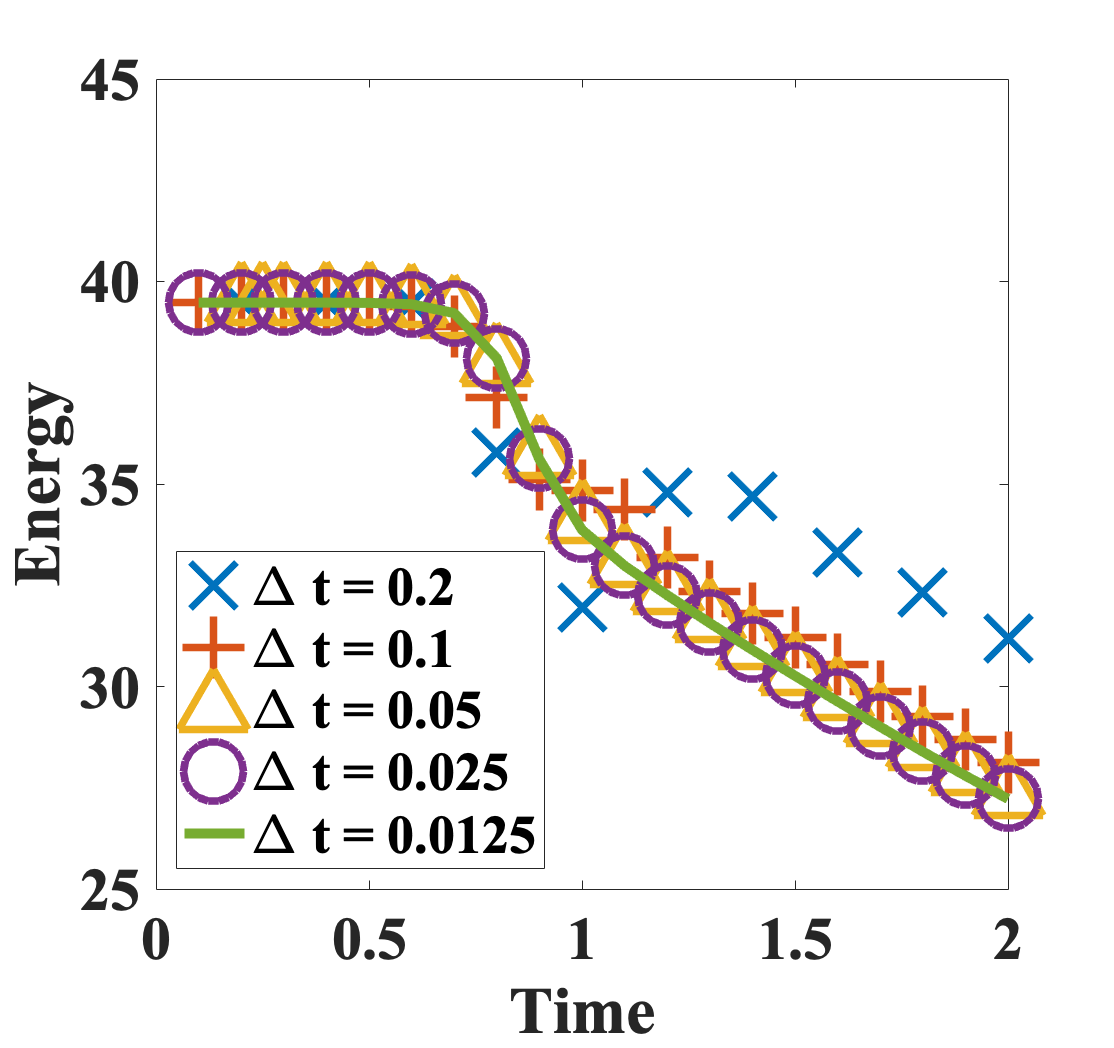}}

\subfigure[BDF6 Scheme]{\includegraphics[height=4cm]{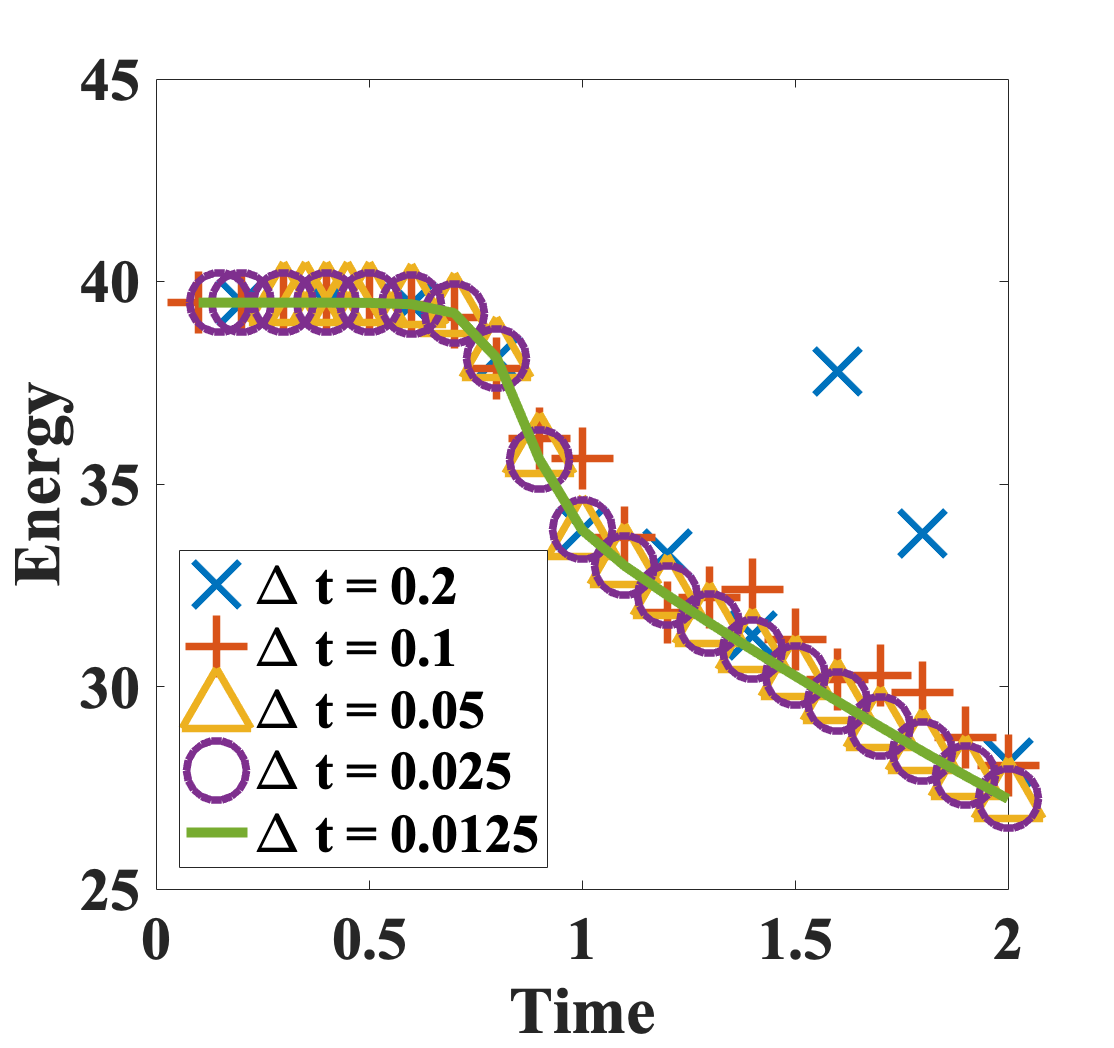}}
\subfigure[4th Order Gauss Method]{\includegraphics[height=4cm]{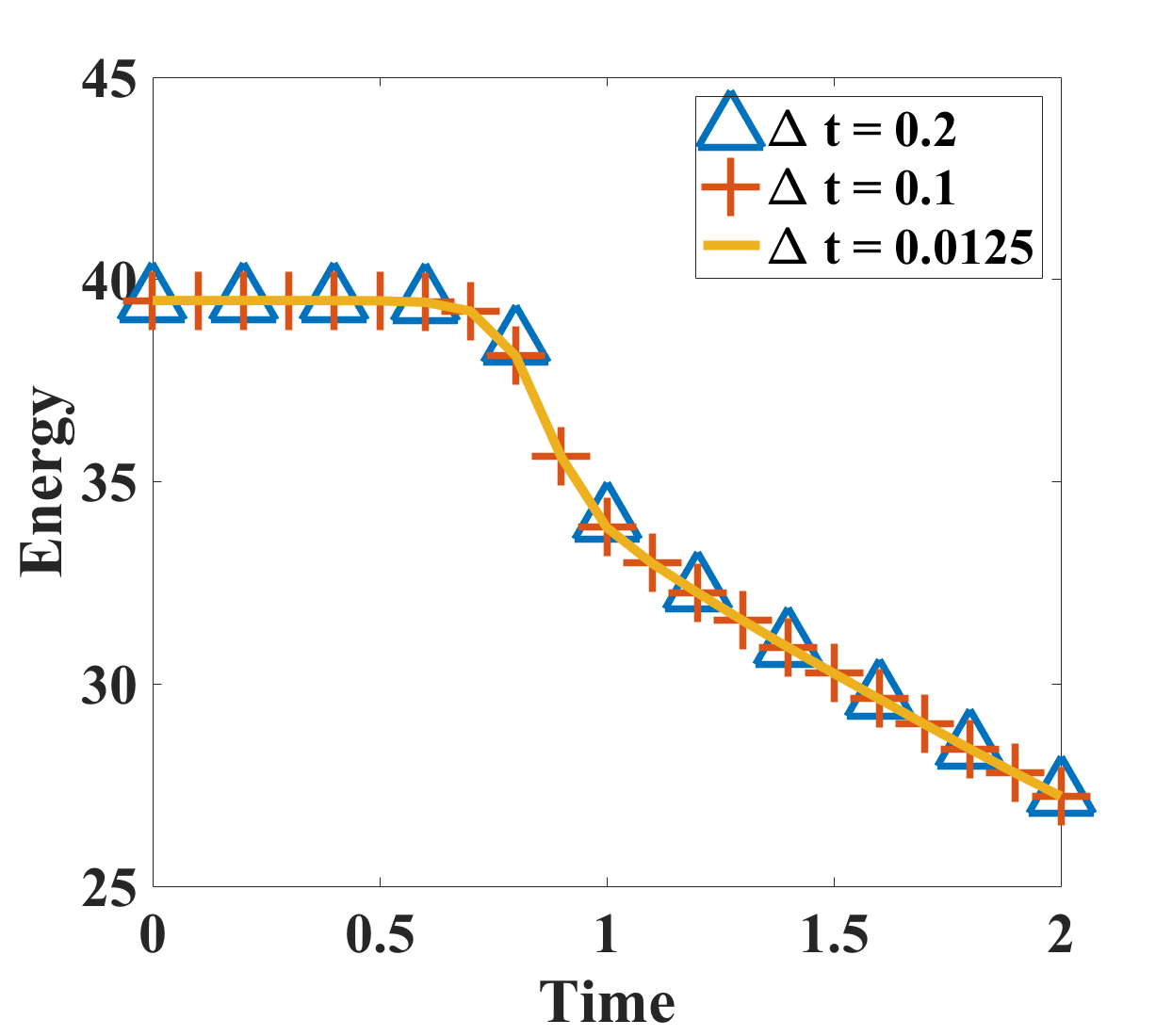}}
\subfigure[6th Order Gauss Method]{\includegraphics[height=4cm]{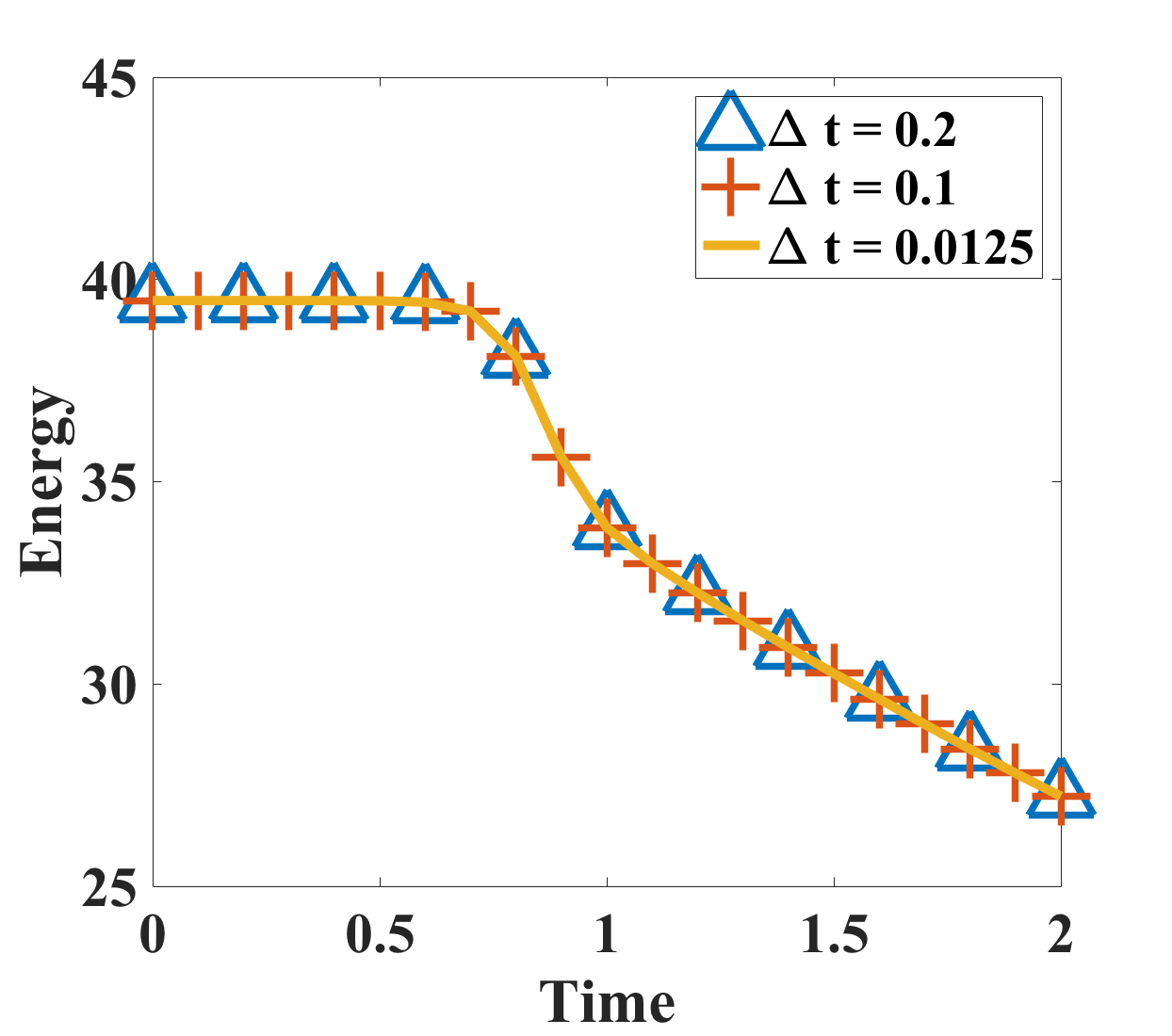}}

\caption{The energy calculated using various schemes in time period $[0, \,\,\, 2]$ . The figures show the energy predictions using (a) the LCN scheme; (b) ICN scheme; (c) fourth-order Gaussian collocation method; (d) sixth order Gaussian collocation method. Except for the LCN method, all other methods give a very good solution at large time step sizes.}
\label{fig:Coarsening-CN-Energy}
\end{figure}

One typical simulation using the 6th order Gaussian collocation method with a time step size $\Delta t = 0.01$ is depicted in Figure \ref{fig:Coarsening-Gauss-example}, where we observe fairly accurate prediction of the coarsening dynamics in various times.
\begin{figure}
\centering

\subfigure{
\includegraphics[height=4cm]{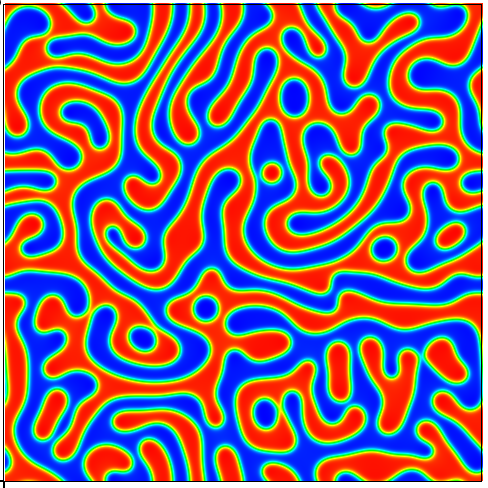}
\includegraphics[height=4cm]{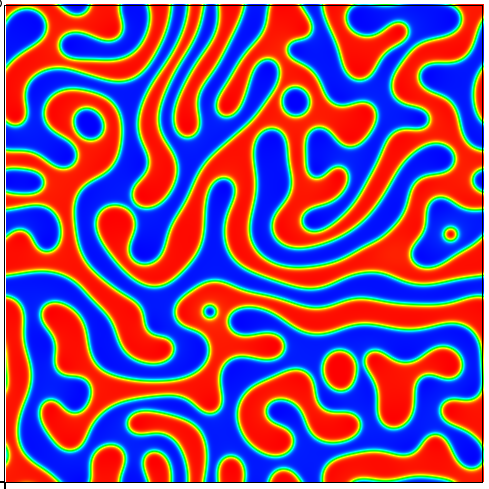}
\includegraphics[height=4cm]{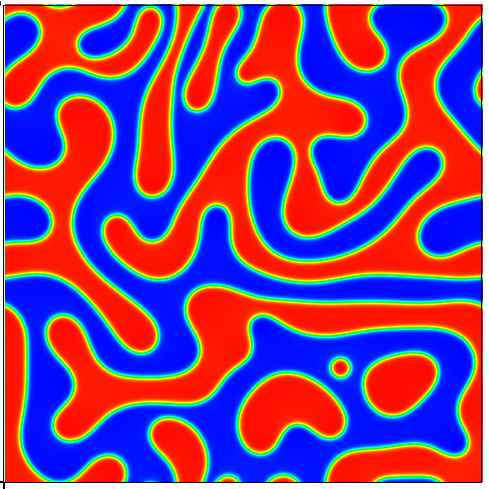}
}
\subfigure{
\includegraphics[height=4cm]{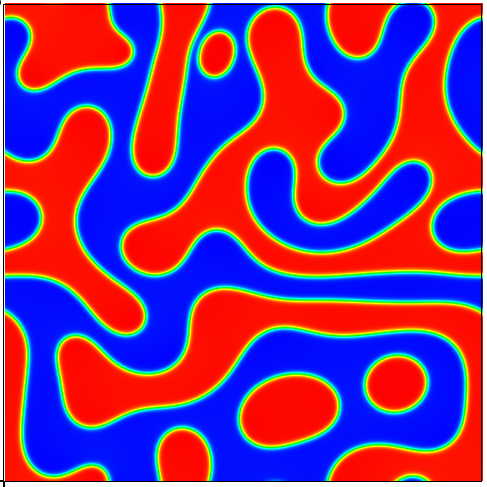}
\includegraphics[height=4cm]{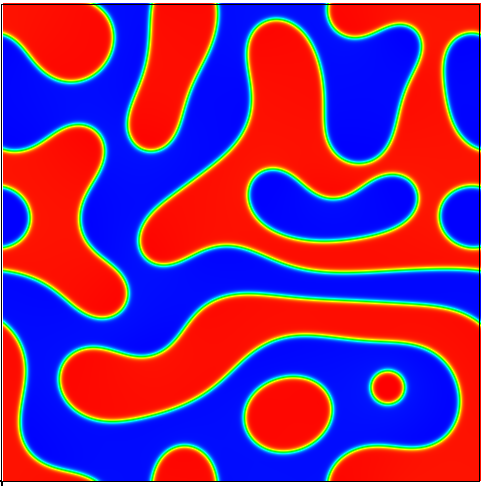}
\includegraphics[height=4cm]{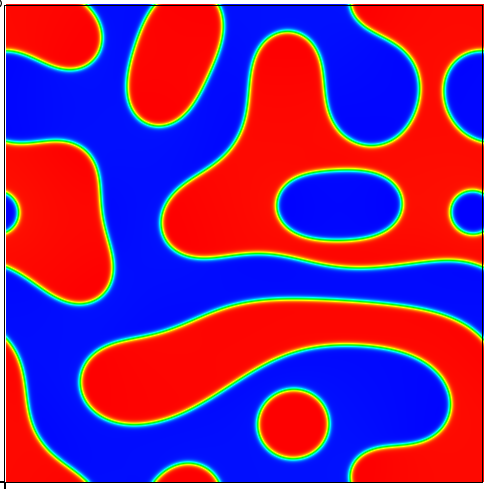}
}
\caption{The profile of $\phi$ during coarsening. Here red represents $\phi=1$ and blue represents $\phi=-1$. The profiles of $\phi$ at time $t=10,20,50,100,200,380$ are shown, respectively. }
\label{fig:Coarsening-Gauss-example}
\end{figure}

\textbf{Example 2: Allen-Cahn Equation.} Here we test the proposed numerical schemes on the  Allen-Cahn equation
\beq
\partial_t \phi = -\lambda \Big( -\varepsilon^2 \Delta \phi + \phi^3 - \phi \Big).
\eeq
Introducing the same auxiliary variable, $q =\frac{1}{\sqrt{2}} (\phi^2 -1 - \gamma_0),$ we obtain the reformulated EQ system
\beq
\left\{
\bea{l}
\partial_t \phi = -\lambda  ( -\varepsilon^2 \Delta \phi + \gamma_0 \phi + q g(\phi) ), \\
\partial_t q = g(\phi) \partial_t \phi, \quad g(\phi) = \sqrt{2} \phi,
\eea
\right.
\eeq
which can be written into
\beq
\frac{d}{d t} \Psi = \cA^T \cG \cA \cB \Psi, \quad \Psi=(\phi, q)^T, \quad \cA = (1 , g(\phi)), \quad \cB = \textrm{diag}(-\varepsilon^2 \Delta +\gamma_0, 1), \quad \cG= -\lambda.
\eeq

Here we test the numerical schemes via a benchmark example used in \cite{Che.S98}. We choose the parameter values as $\lambda=1, \varepsilon=1$, $L_x=L_y=256$. We use   $256\times 256$ mesh points to reduce the spatial error. The initial condition is chosen as a disk,
\beq
\phi(x,y,0) = \left\{
\bea{l}
1, \quad x^2 + y^2 < 100^2 \\
-1, \quad x^2 + y^2 \geq 100^2.
\eea
\right.
\eeq
It has been shown  that the radius $R$ of the disk at time $t$ is given as $R=\sqrt{R_0^2-2t}$ \cite{Che.S98}, where $R_0$ is the initial radius. In other words, the volume of the disk is given as $V = \pi R_0^2 - 2\pi t$. To test it, we implement and compare the proposed schemes with different time steps. The calculated results using the second-order CN schemes are summarized in Figure \ref{fig:AC-CN}(a-b), where we observe that when $\Delta t=0.0125$, the LCN scheme can predict the correct $-2\pi$ slope for the volume decreasing rate. For the ICN scheme, even with $\Delta t=0.5$, it predicts the correct volume decreasing rate. For the BDF4 and BDF6 schemes, they require approximately $\Delta t =0.025$ to predict the volume decreasing rate accurately, as shown in Figure \ref{fig:AC-CN}(c-d). The calculated volumes using  the HEQ schemes with different time step sizes are summarized in Figure \ref{fig:AC-CN}(e-f). It shows the Gaussian collocation method is more accurate such that even with time step size $\Delta t = 5$, it predicts a very accurate volume decaying rate $-2\pi$.

\begin{figure}
\centering

\subfigure[]{\includegraphics[height=5cm,width=5cm]{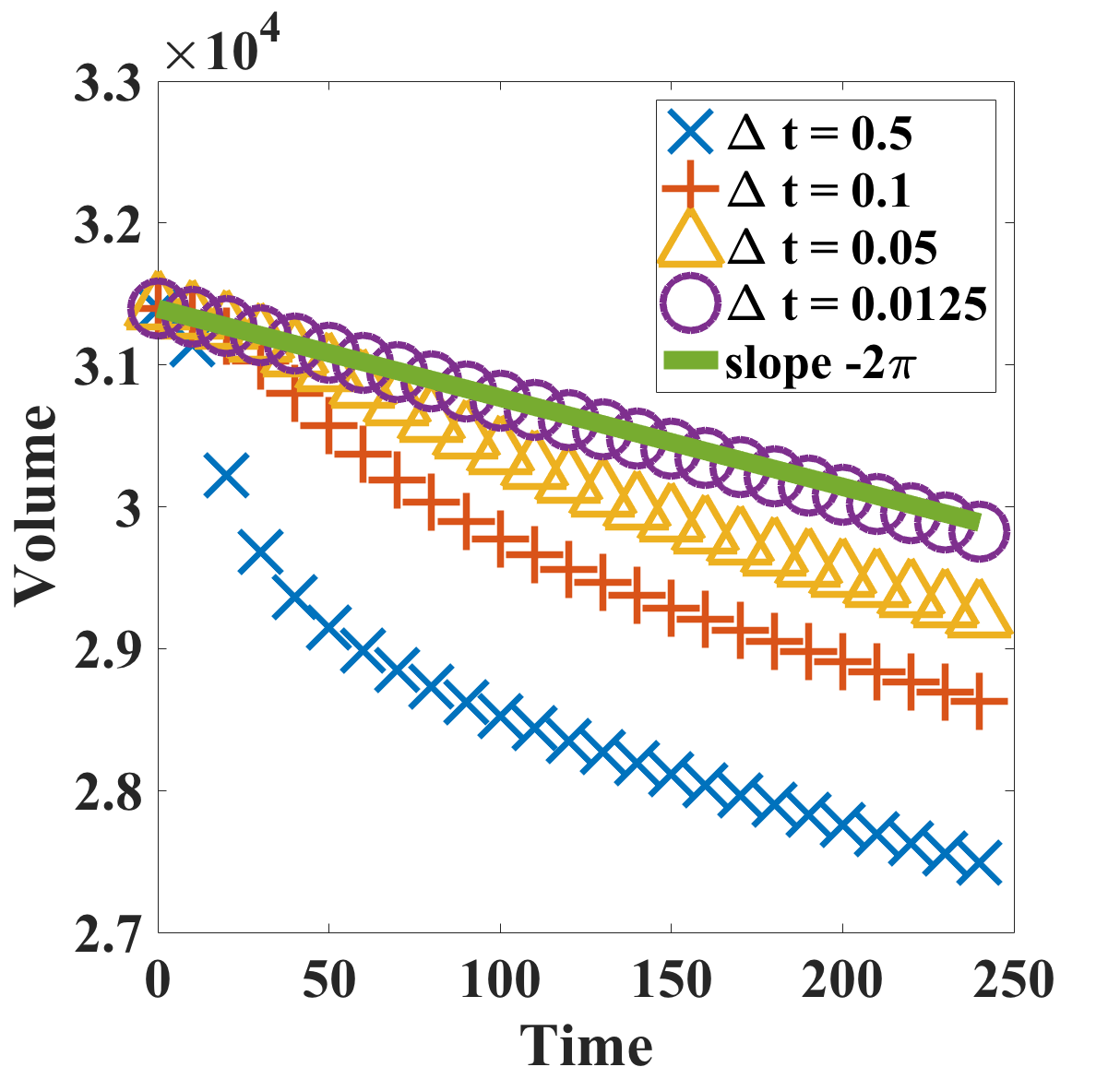}}
\subfigure[]{\includegraphics[height=5cm,width=5cm]{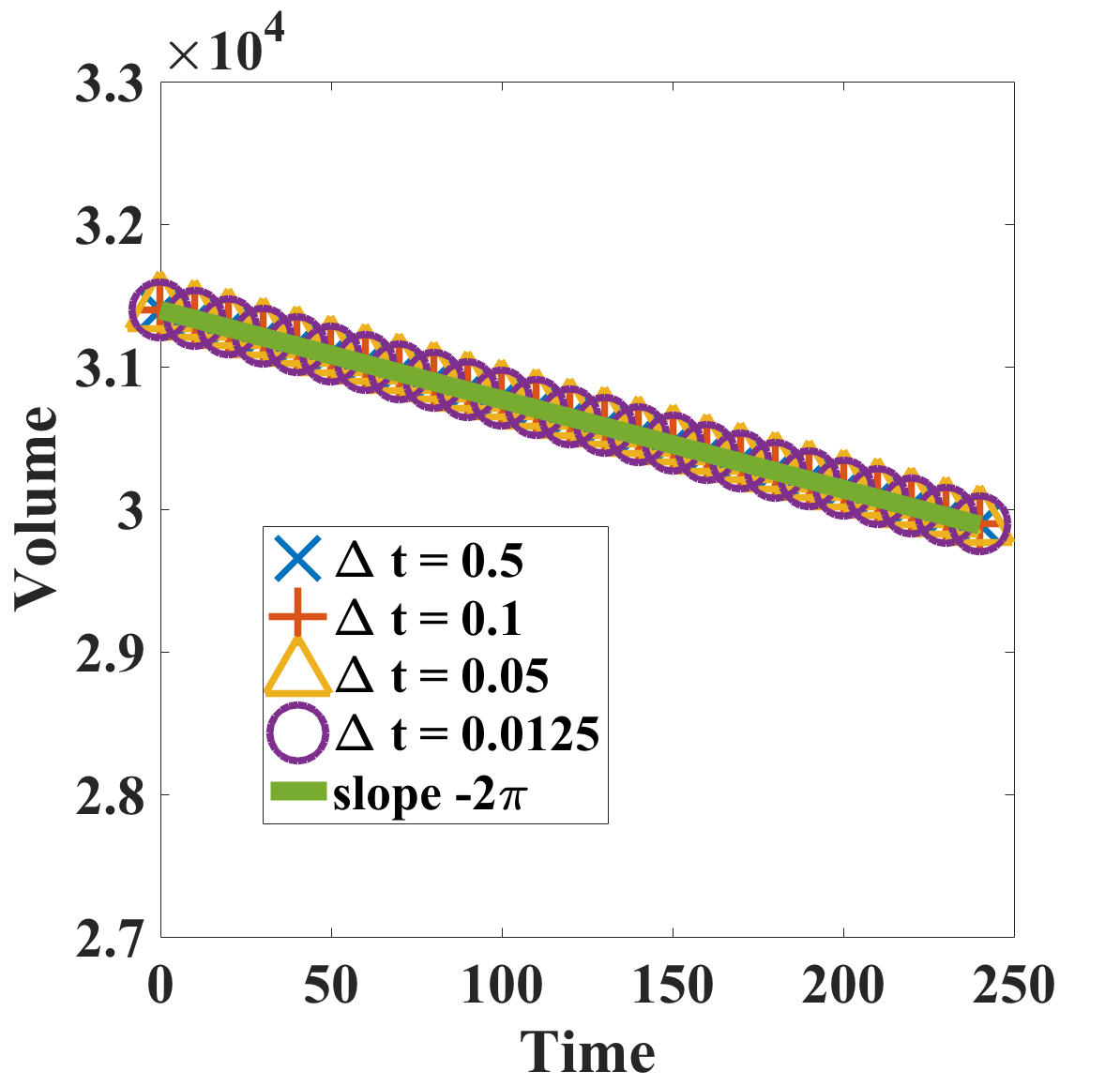}}
\subfigure[]{\includegraphics[height=5cm,width=5cm]{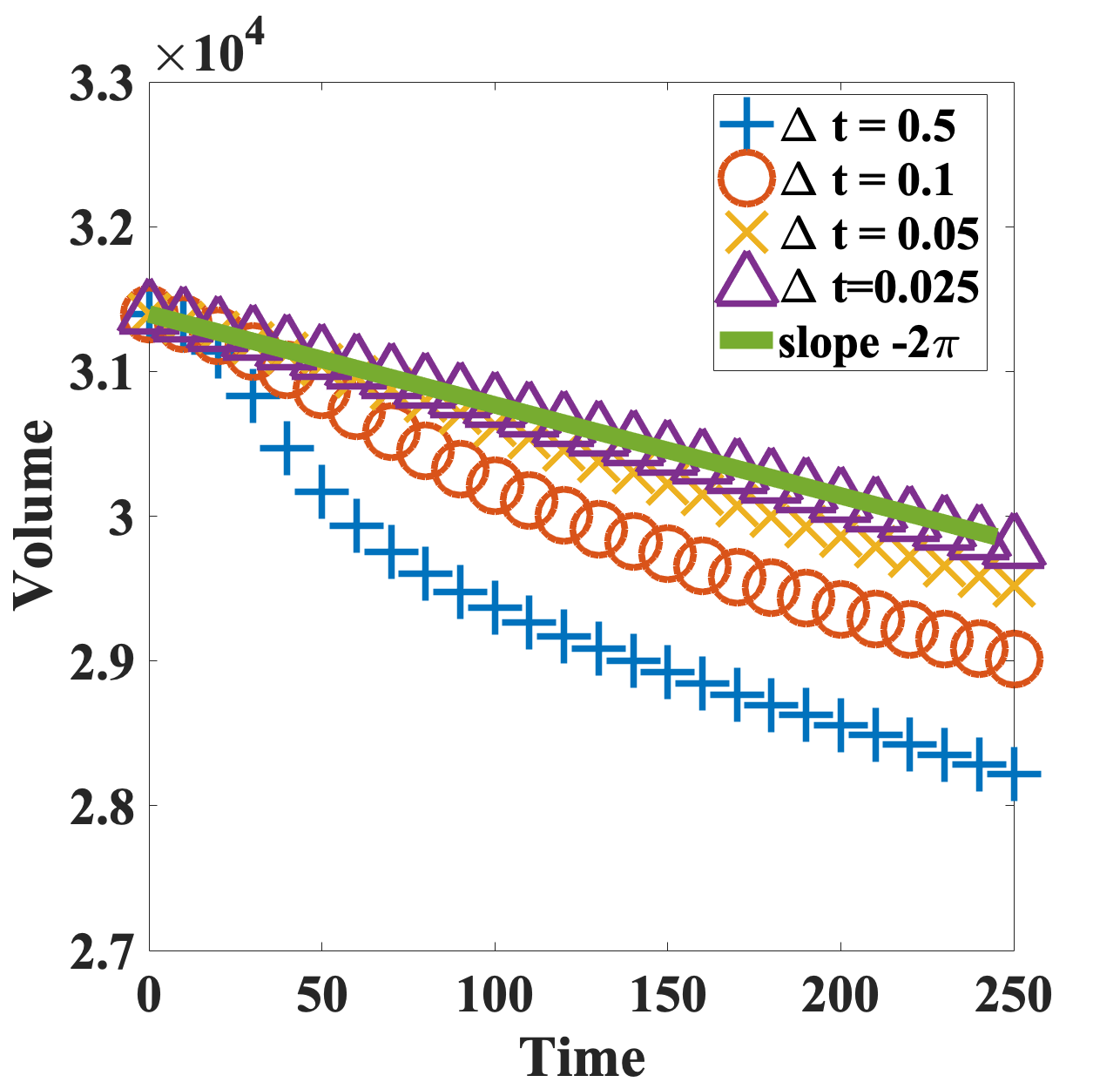}}

\subfigure[]{\includegraphics[height=5cm,width=5cm]{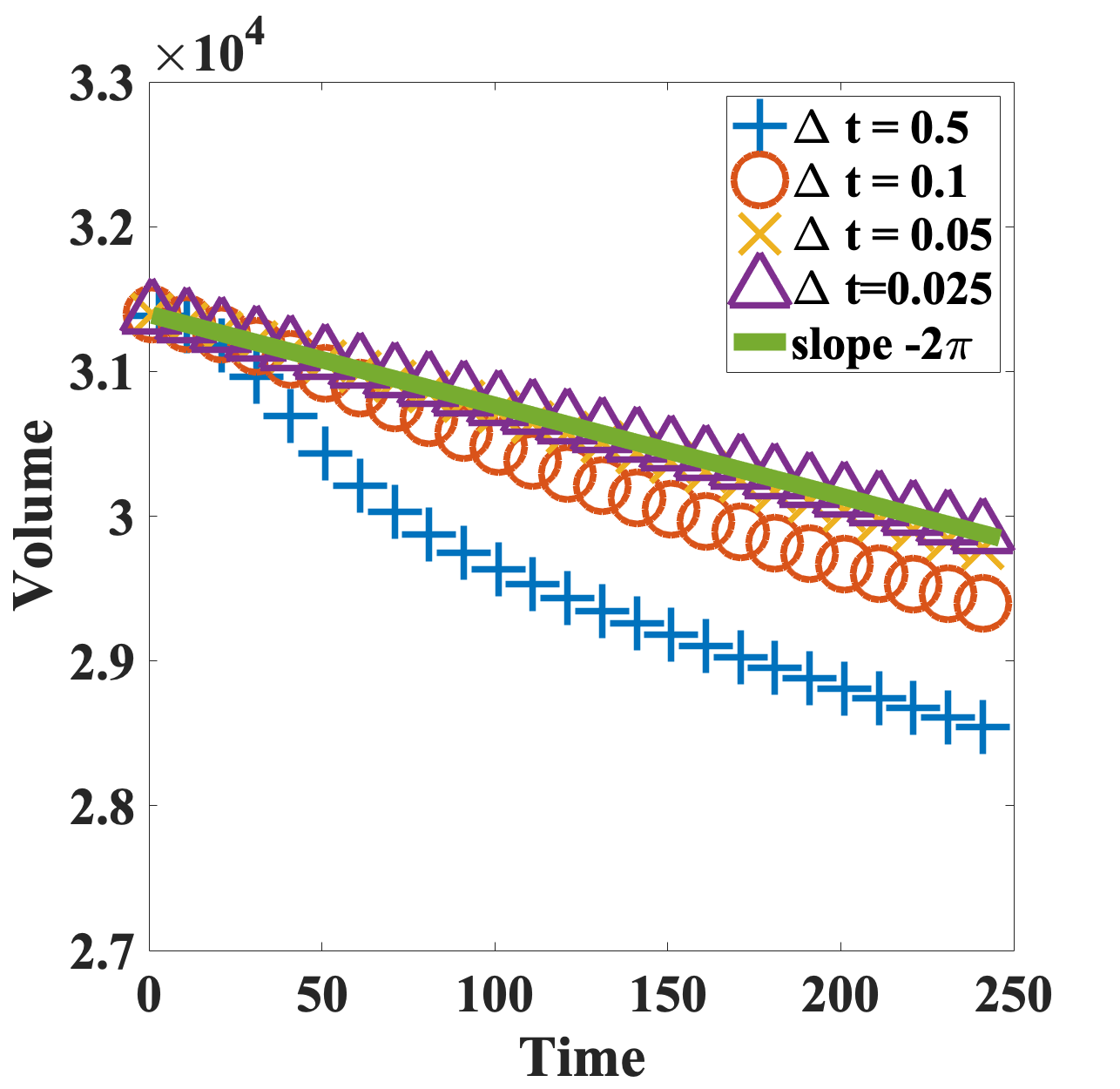}}
\subfigure[]{\includegraphics[height=5cm,width=5cm]{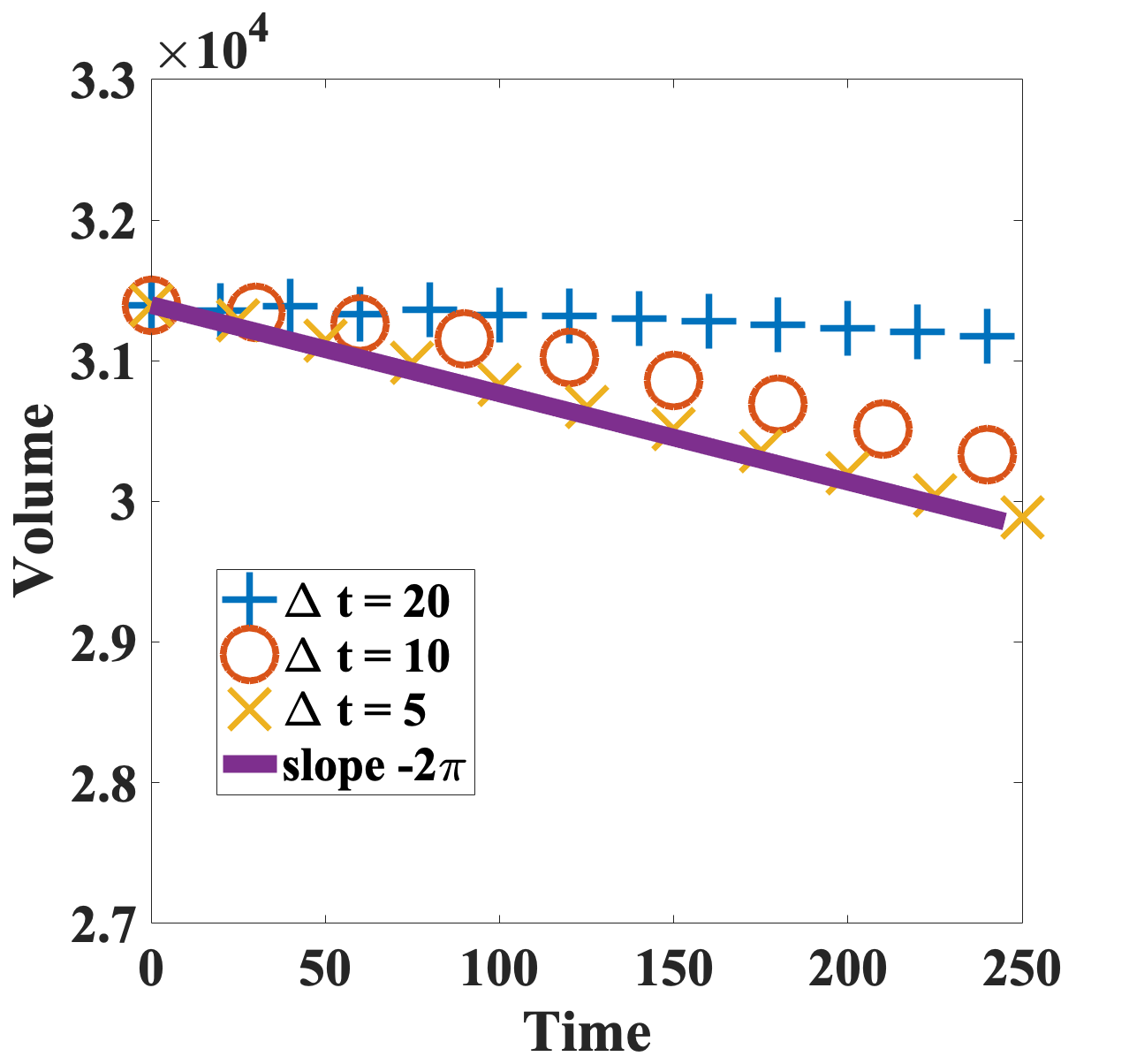}}
\subfigure[]{\includegraphics[height=5cm,width=5cm]{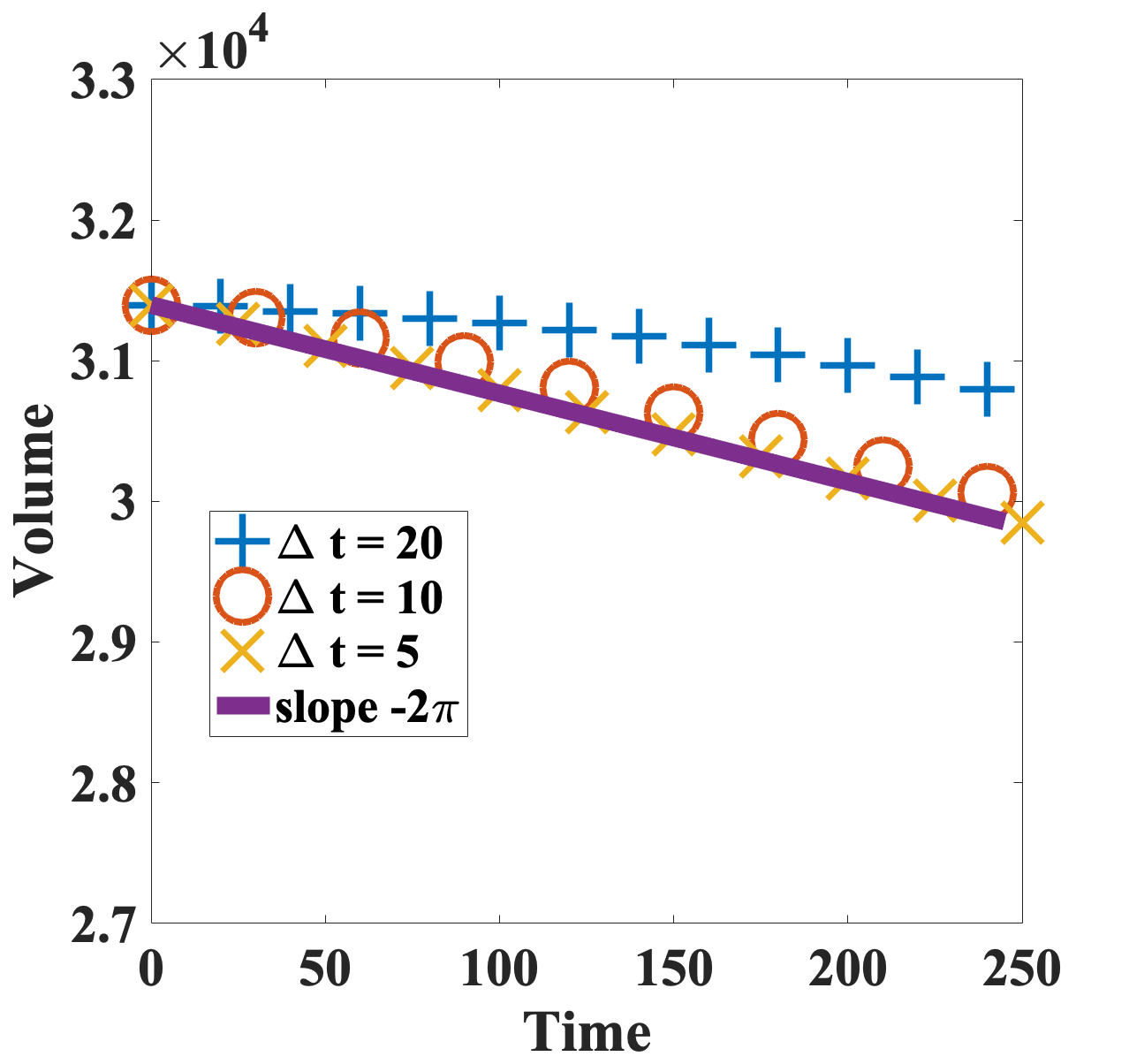}}

\caption{Allen-Cahn dynamics of a shrinking disk. The calculated volume using (a) the LCN scheme; (b) the ICN scheme; (c) the BDF4 scheme; (d) the BDF6 scheme; (e) the 4th order HEQ Collocation scheme; (f) the 6th order HEQ Collocation scheme.}
\label{fig:AC-CN}
\end{figure}

In particular, the temporal evolution of the disk using ICN scheme with time step size $\Delta t=0.1$ is plotted in Figure \ref{fig:AC-CN-phi}.
\begin{figure}
\centering

\subfigure{
\includegraphics[height=3cm]{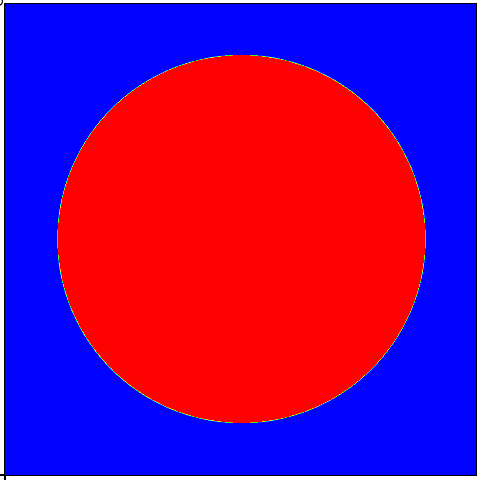}
\includegraphics[height=3cm]{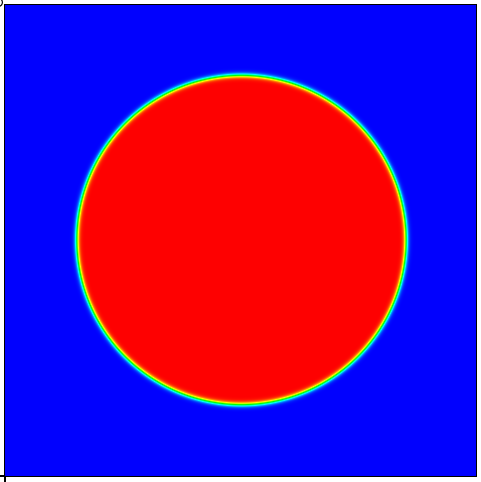}
\includegraphics[height=3cm]{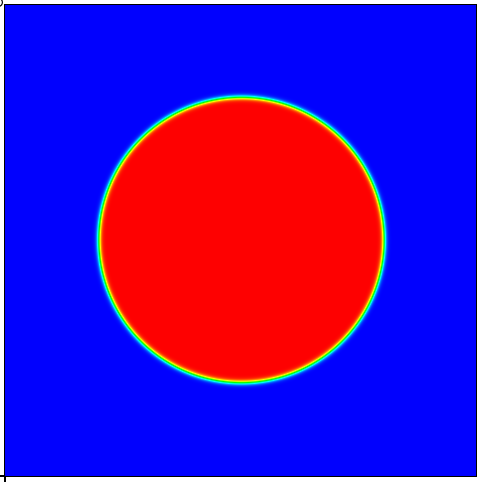}
\includegraphics[height=3cm]{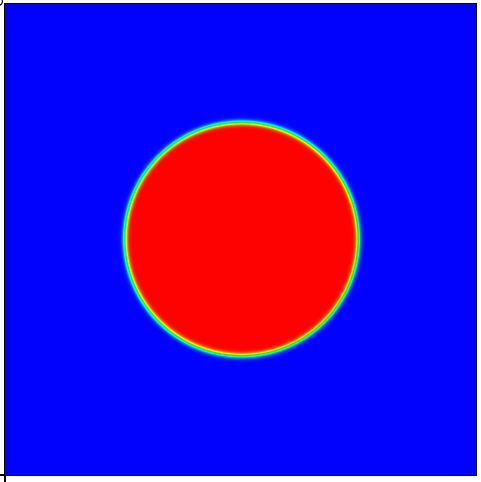}
}
\caption{Time evolution of a disk  driven by mean curvature. The profile of $\phi$ at time $t=0,1000,2000,3000$ are depicted.}
\label{fig:AC-CN-phi}
\end{figure}

\textbf{Example 3: Phase Field Crystal Growth Model.}
Next we solve the phase field  crystal growth model by the proposed schemes. The phase field crystal growth model was introduced in \cite{Elder2002,Elder2004,Vignal&Dalcin&Brown&Collier&CaloCAS2015} in the form of a Cahn-Hilliard equation \eqref{CH} with the free energy
\beq
F = \left(\frac{1}{4}\phi^4 - \frac{a}{2}\phi^2,1\right) + \frac{c}{2}\big(\|\phi\|^2 - 2\|\nabla\phi\|^2 + \|\Delta\phi\|^2\big).
\eeq
We set the parameter values $a=0.325, c=1.$ We use the initial conditions and parameter values given in \cite{Vignal&Dalcin&Brown&Collier&CaloCAS2015}, i.e.
$\phi_0 (\bx)= \overline{ \phi} + \omega(\bx) (A \phi_s(\bx))$,
$\phi_s(\bx) = \cos( \frac{k}{\sqrt{3}}y) \cos(k x) - \frac{1}{2} \cos (\frac{2k}{\sqrt{3}} y)$,
where $k$ represents a wavelength related to the lattice constant, $A$ represents an amplitude of the fluctuations in density, and the scaling function $\omega(\bx)$ is defined as
\beq
\omega(\bx) =
\left\{
\bea{rcl}
&& \Big(1 - (\frac{\| \bx - \bx_0\|}{d_0})^2 \Big)^2, \quad \mbox{if} \quad \| \bx - \bx_0\| \leq d_0, \\
&& 0, \quad \mbox{otherwise,}
\eea
\right.
\eeq
$d_0$ is a prescribed parameter.
In the simulation, we choose $\Omega =[0, \,\, 150]^2$, $\varepsilon = 0.325$, $\overline{ \phi} = \frac{\sqrt{\varepsilon}}{2}$, $A = \frac{4}{5} \Big( \overline{ \phi} + \frac{ \sqrt{15 \varepsilon - 36 \overline{ \phi}^2}}{3} \Big)$, $d_0=25$, $k=\frac{\sqrt{3}}{2}$ and $512\times 512$  meshes. A numerical simulation with the 4th order Gaussian collocation method is summarized in Figure \ref{fig:PFC}, where we observe qualitatively similar predictions as reported in   \cite{Wis09,PhaseFieldCrystal,Yang-EQ-PFC}.

\begin{figure}
\centering

\subfigure{
\includegraphics[height=4cm]{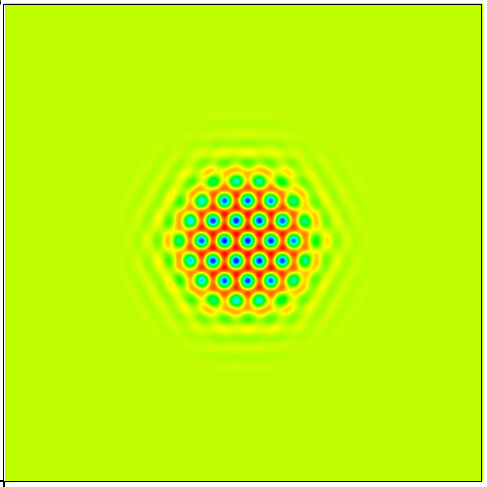}
\includegraphics[height=4cm]{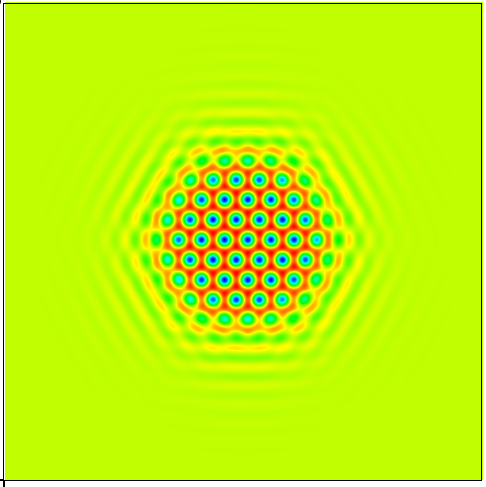}
\includegraphics[height=4cm]{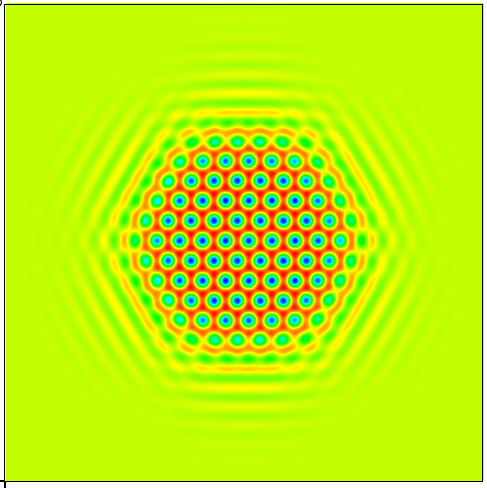}
}

\subfigure{
\includegraphics[height=4cm]{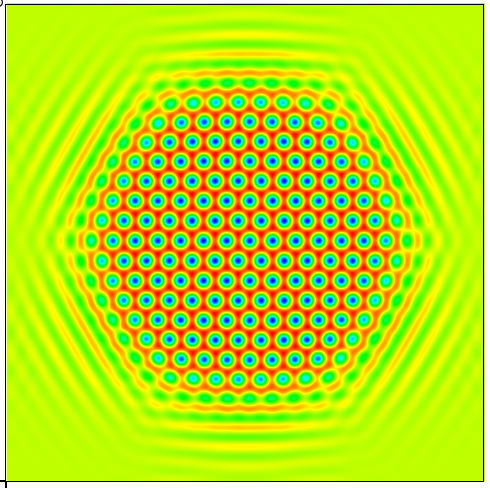}
\includegraphics[height=4cm]{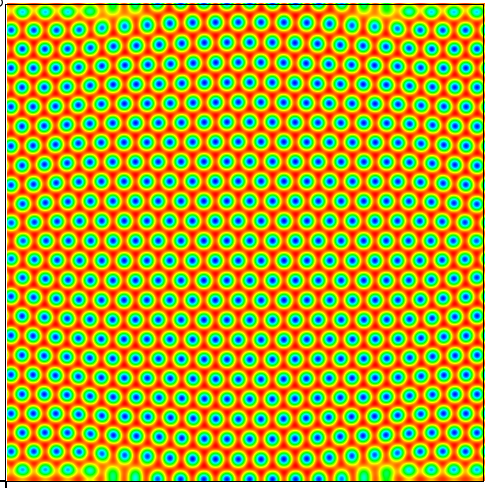}
\includegraphics[height=4cm]{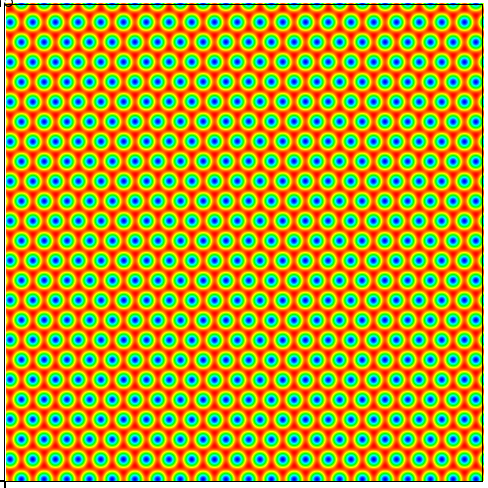}
}
\caption{The phase field crystal growth dynamics at times $t=10,20,30,50,100,1000$. }
\label{fig:PFC}
\end{figure}

\textbf{Example 4: Molecular Beam Epitaxy Model.}
Next, we study the molecular beam epitaxy (MBE) model \eqref{MBE}.
With  $\phi$ representing the scaled height function of the thin film, the continuum MBE model reads as 
\beq
\partial_t \phi = -M \Big( \varepsilon^2 \Delta \phi  + \nabla \cdot \Big( (1-|\nabla \phi|^2)\nabla \phi) \Big), (\bx,t) \in \Omega \times (0, T].
\eeq
By introducing $q =\frac{1}{\sqrt{2}} \Big( |\nabla \phi|^2 -1 - \gamma_0 \Big)$, the schemes proposed in previous sections could be readily applied to this model. Here we omit the details for simplicity.

To compare the schemes, we carry out a standard benchmark problem used in \cite{Li&Liu2003, Li&LiuJNS2004}. 
Given the domain $[0, 2\pi]^2$ and parameter values $\lambda=1$, $\epsilon^2=0.1$, and initial profile
$\phi(x,y) = 0.1 ( \sin(2x)\sin(2y) + \sin(5x)\sin(5y) )$.  We use $256 \times 256$ meshes. The comparisons are summarized in Figure \ref{fig:MBE}. It shows that even though the LCN scheme is unconditionally energy stable, it predicts erratic coarsening dynamics with large time steps. Also, the BDF-6 scheme requires relatively small time step to predict the accurate energy curve. On the contrary, the ICN scheme and the HEQ Gaussian collocation scheme can predict robust dynamics with larger time steps. As we have alluded to earlier that the ICN scheme only increases the CPU time slightly for up to 5 times of an FFT  solve in each predictor-corrector step \eqref{CN-P2}.

\begin{figure}
\centering
\subfigure[LCN scheme]{\includegraphics[height=3.75cm,width=3.75cm]{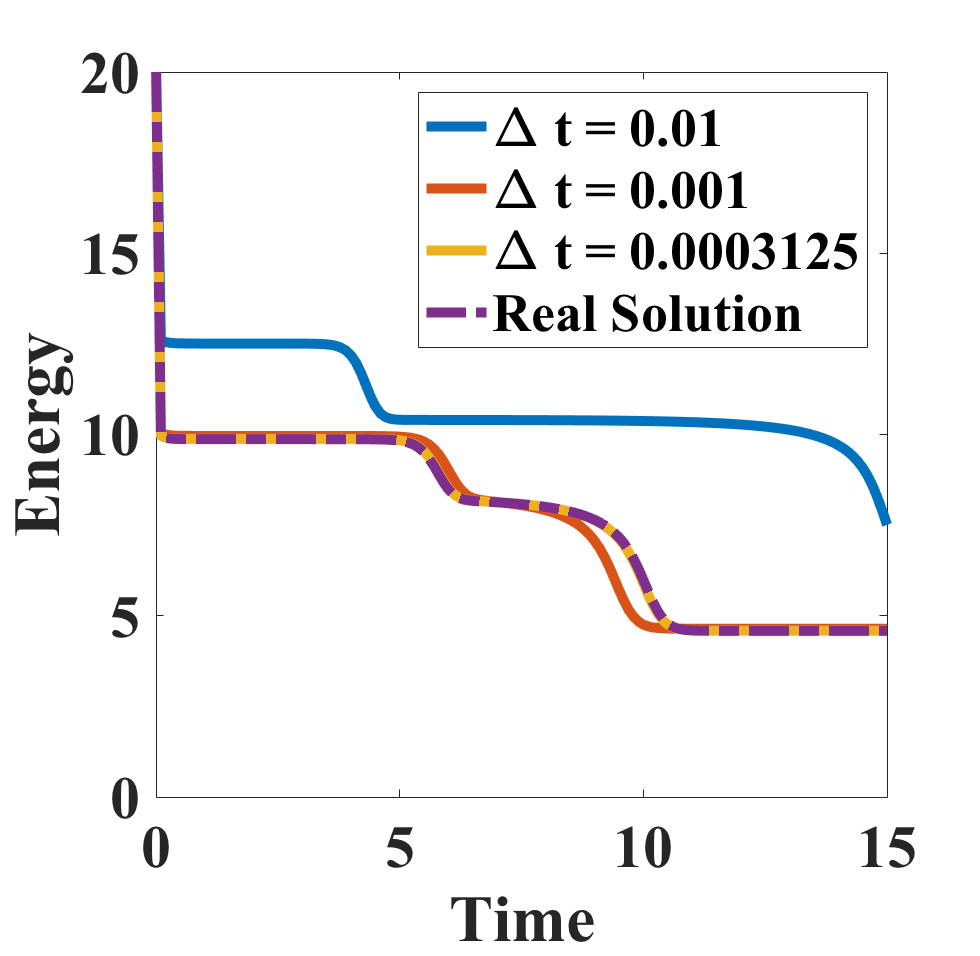}}
\subfigure[ICN scheme]{\includegraphics[height=3.75cm,width=3.75cm]{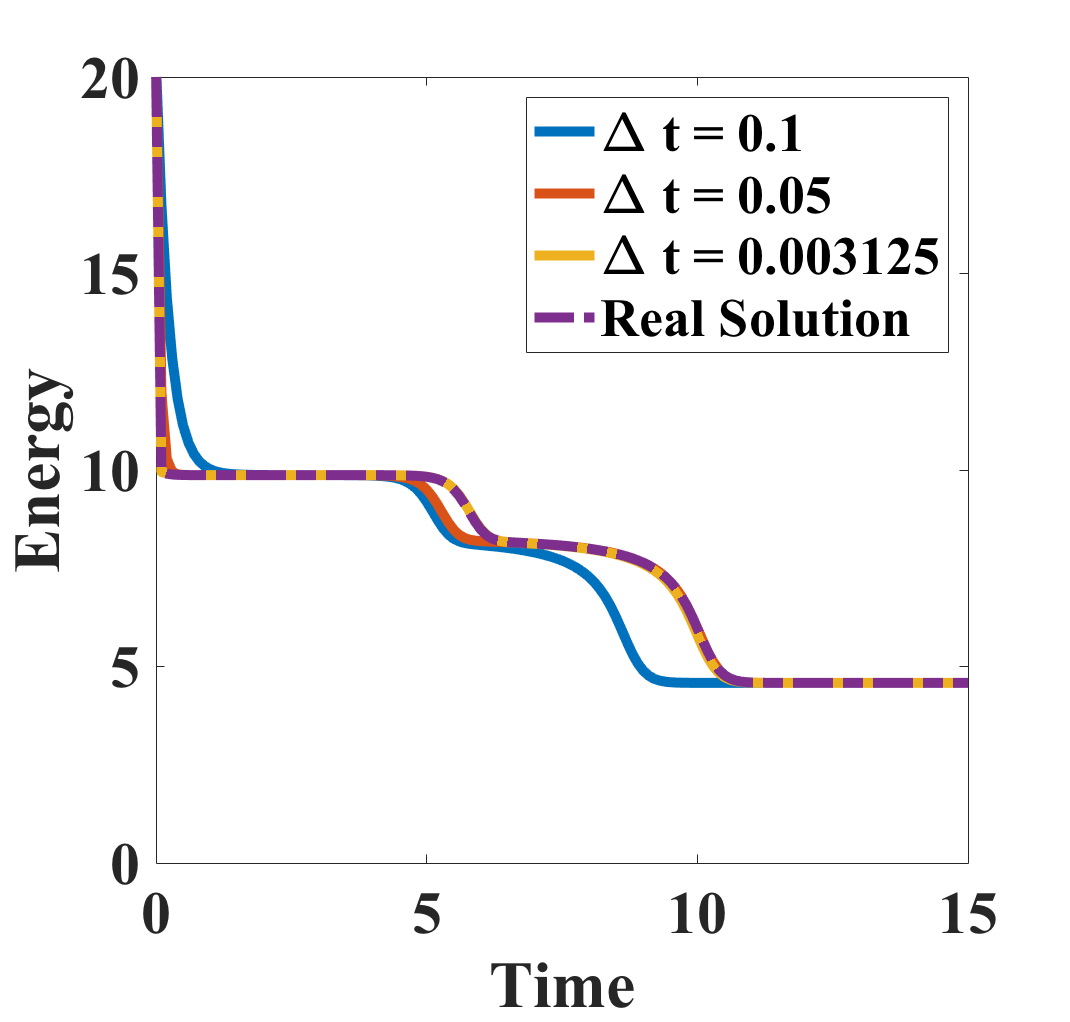}}
\subfigure[BDF6 scheme]{\includegraphics[height=3.75cm,width=3.75cm]{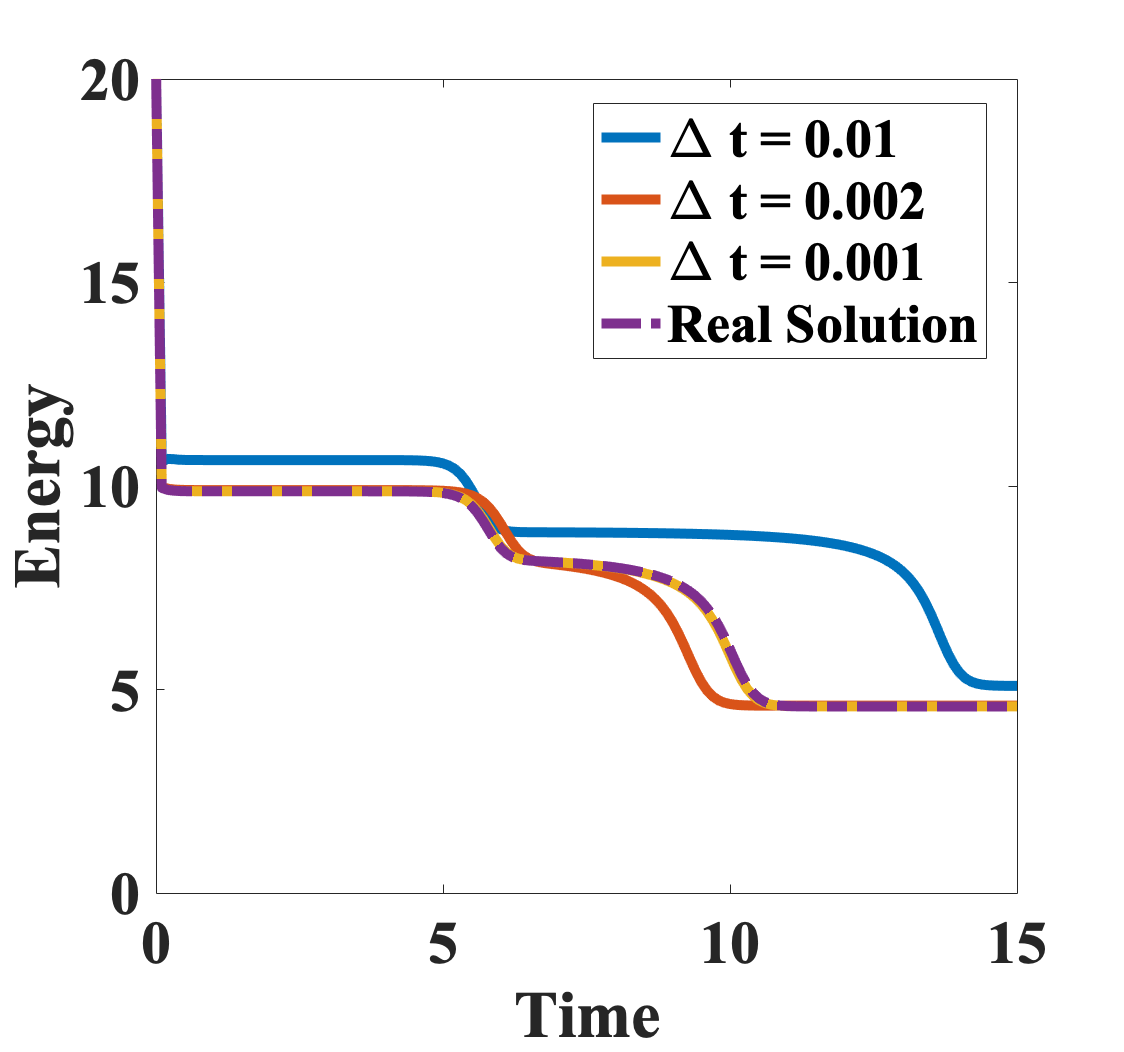}}
\subfigure[4th-order Gauss scheme]{\includegraphics[height=3.75cm,width=3.75cm]{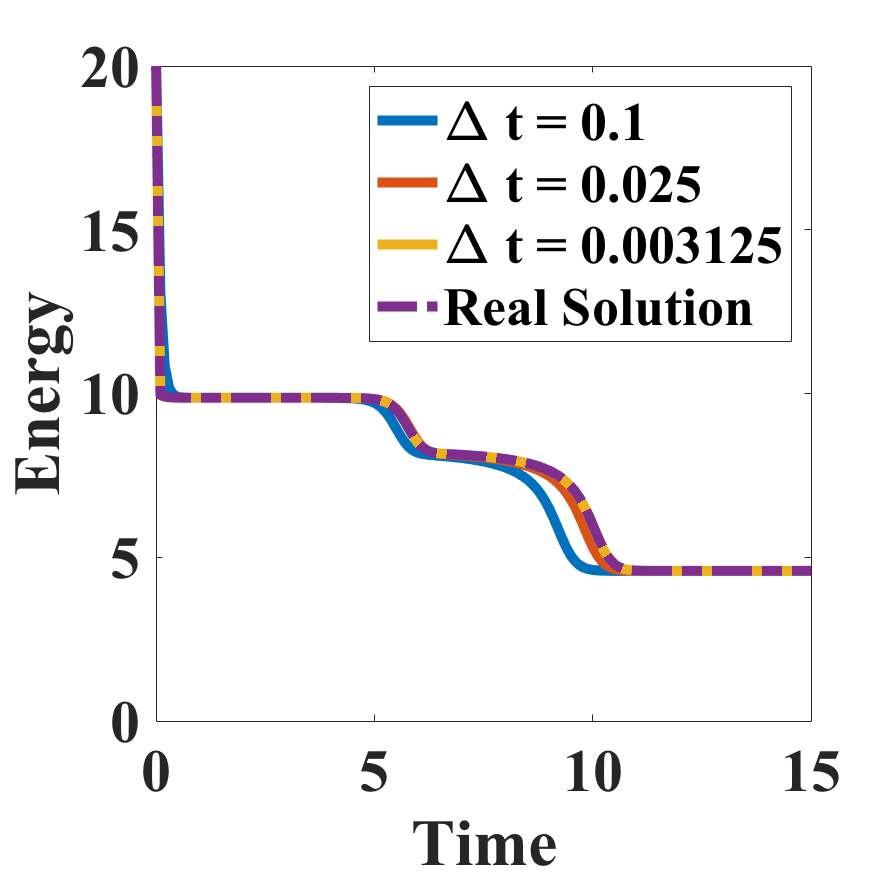}}
\caption{A comparison of free energies in MBE using different schemes with respect to various time steps. This figure shows both the ICN scheme and the HEQ Gaussian collocation method predict fairly  accurate energy profiles with relative large time steps, while the LCN scheme and the BDF6 scheme require finer time steps to reach the similar accuracy.}
\label{fig:MBE}
\end{figure}

%
%

\section{Conclusion}\label{sec:Con}

In this paper, we have demonstrated two general strategies to derive unconditionally energy-stable numerical approximations for thermodynamically consistent gradient flow models. In the first strategy, we present a prediction-correction approach to derive effective, linear schemes to solve the nonlinear thermodynamically consistent gradient flow models and in the meantime improve the accuracy of the schemes. The new schemes retain second-order accurate in time, and their errors are much smaller than those of the linear schemes. Moreover, the 2nd order convergence rate is attained even with a large time step.  In the second strategy, we propose a novel idea utilizing the quadratic-invariant preserving Runge-Kutta multi-stage discretization, resulting in arbitrary order, unconditionally energy stable numerical approximations for a general class of gradient flow models, which we named HEQ schemes. Unconditional energy stability is established rigorously.

Several numerical experiments for gradient flow problems are presented to illustrate the accuracy and efficiency of the numerical schemes. The gradient flow models include the Cahn-Hilliard equation with the Ginzburg-Landau double well free energy, the Allen-Cahn equation with the same energy functional, the crystal growth model, and the molecular beam epitaxy grwoth model. These models are solved with the proposed schemes, and numerical comparisons are presented. Through numerical experiments, it is clear that the prediction-correction CN scheme yields a significantly smaller error than the linear CN scheme, while the prediction-correction BDF2 scheme demonstrates the similar improvement over the lower order method at large time steps.
This newly proposed prediction-correction schemes and the high-order schemes based on Runge-Kutta methods are rather general that they can be readily applied to a large class of thermodynamically consistent models. Also, instead of using the EQ approach, the proposed numerical strategies here could easily be applied to the reformulated models using the SAV approach to derive high-order, efficient energy stable schemes.

\section*{Acknowledgment}
Yuezheng Gong's work is partially supported by the Natural Science Foundation of Jiangsu Province (Grant No. BK20180413) and the National Natural Science Foundation of China (Grant No. 11801269). Qi Wang's research is partially supported by NSF-DMS-1517347,  DMS-1815921, and OIA-1655740 award, NSFC awards \#11571032, \#91630207 and NSAF-U1530401. Jia Zhao's work is partially supported by National Science Foundation under grant number NSF DMS-1816783.


\begin{thebibliography}{10}
	
	\bibitem{Badia&Guillen-Gonzalez&Gutierrez-SantacreuJCP2011}
	S.~Badia, F.~Guillen-Gonzalez, and J.~Gutierrez-Santacreu.
	\newblock Finite element approximation of nematic liquid crystal flows using a
	saddle-point structure.
	\newblock {\em Journal of Computational Physics}, 230:1686--1706, 2011.
	
	\bibitem{Bird87}
	R.~Bird, C.~F. Curtiss, R.~C. Armstrong, and O.~Hassager.
	\newblock {\em Dynamics of Polymeric Liquids: Kinetic Theory}, volume~2.
	\newblock John Wiley and Sons Inc, 1987.
	
	\bibitem{Chen&Zhao&Yang2018}
	L.~Chen, J.~Zhao, and X.~Yang.
	\newblock Regularized linear schemes for the molecular beam epitaxy model with
	slope selection.
	\newblock {\em Applied Numerical Mathematics}, 128:138--156, 2018.
	
	\bibitem{Che.S98}
	L.~Q. Chen and J.~Shen.
	\newblock Applications of semi-implicit {F}ourier-spectral method to
	phase-field equations.
	\newblock {\em Comput. Phys. Comm.}, 108:147--158, 1998.
	
	\bibitem{ChenetalPositivity}
	W.~Chen, C.~Wang, X.~Wang, and S.~Wise.
	\newblock Positivity-preserving, energy stable numerical schemes for the
	{C}ahn-{H}illiard equation with logarithmic potential.
	\newblock {\em ArXiv: 1712.03225v2}, 2019.
	
	\bibitem{Chen&Wang&Wang&WiseJSC2014}
	W.~Chen, C.~Wang, X.~Wang, and S.~M. Wise.
	\newblock A linear iteration algorithm for a second-order energy stable scheme
	for a thin film model without slope selection.
	\newblock {\em Journal of Scientific Computing}, 59(3):574--601, 2014.
	
	\bibitem{WangCheng-6}
	W.~Chen, C.~Wang, X.~Wang, and S.~M. WIse.
	\newblock Positivity-preserving, energy stable numerical schemes for the
	cahn-hilliard equation with logarithmic potential.
	\newblock {\em Journal of Computational Physics}, 3:100031, 2019.
	
	\bibitem{WangCheng-3}
	K.~Cheng, W.~Feng, C.~Wang, and S.~M. Wise.
	\newblock An energy stable fourth order finite difference scheme for the
	cahn--hilliard equation.
	\newblock {\em Journal of Computational and Applied Mathematics}, 362:574--595,
	2019.
	
	\bibitem{Dahlby2011A}
	M.~Dahlby and B.~Owren.
	\newblock A general framework for deriving integral preserving numerical
	methods for {PDE}s.
	\newblock {\em SIAM Journal on Scientific Computing}, 33(5):2318--2340, 2011.
	
	\bibitem{Elder2002}
	K.~R. Elder and M.~Grant.
	\newblock Modeling elastic and plastic deformations in nonequilibrium
	processing using phase field crystals.
	\newblock {\em Physical Review E}, 70(051605), 2004.
	
	\bibitem{Elder2004}
	K.~R. Elder, M.~Katakowski, M.~Haataja, and M.~Grant.
	\newblock Modeling elasticity in crystal growth.
	\newblock {\em Physical Review Letters}, 88(245701), 2002.
	
	\bibitem{PhaseFieldCrystal}
	M.~Elsey and B.~Wirth.
	\newblock A simple and efficient scheme for phase field crystal simulation.
	\newblock {\em ESAIM: M2AN}, 47:1413--1432, 2013.
	
	\bibitem{Eyre1998}
	D.~Eyre.
	\newblock Unconditionally gradient stable time marching the {C}ahn-{H}illiard
	equation.
	\newblock {\em Computational and mathematical models of microstructural
		evolution ({S}an {F}rancisco, {CA}, 1998)}, 529:39--46, 1998.
	
	\bibitem{WangCheng-2}
	W.~Feng, Z.~Guan, J.~Lowengrub, C.~Wang, S.~M. Wise, and Y.~Chen.
	\newblock A uniquely solvable, energy stable numerical scheme for the
	functionalized cahn--hilliard equation and its convergence analysis.
	\newblock {\em Journal of Scientific Computing}, 76(3):1938--1967, 2018.
	
	\bibitem{Feng&Wang&Wise&Zhang2017}
	W.~Feng, C.~Wang, S.~Wise, and Z.~Zhang.
	\newblock A second-order energy stable backward differentiation formula method
	for the epitaxial thin film equation with slope selection.
	\newblock {\em Numerical Methods for Partial Differential Equations},
	34(6):1975--2007, 2018.
	
	\bibitem{WangCheng-4}
	W.~Feng, C.~Wang, S.~M. Wise, and Z.~Zhang.
	\newblock A second‐order energy stable backward differentiation formula
	method for the epitaxial thin film equation with slope selection.
	\newblock {\em Numerical Methods for Partial Differential Equations},
	34(6):1975--2007, 2018.
	
	\bibitem{Feng&Tang&YangEAJAM2013}
	X.~Feng, T.~Tang, and J.~Yang.
	\newblock Stabilized crank-nicolson/adams-bashforth schemes for phase field
	models.
	\newblock {\em East Asian Journal on Applied Mathematics}, 3(1):59--80, 2013.
	
	\bibitem{ConvexSplittingPredictor}
	K.~Glasner and S.~Orizaga.
	\newblock Improving the accuracy of convexity splitting methods for gradient
	flow equations.
	\newblock {\em Journal of Computational Physics}, 315:52--64, 2016.
	
	\bibitem{Gomez&HughesJCP2011}
	H.~Gomez and T.~J.~R. Hughes.
	\newblock Provably unconditionally stable, second-order time-accurate, mixed
	variational methods for phase-field models.
	\newblock {\em Journal of Computational Physics}, 230(13):5310--5327, 2011.
	
	\bibitem{Gomez&NogueiraCMAME2012}
	H.~Gomez and X.~Nogueira.
	\newblock An unconditionally energy-stable method for the phase field crystal
	equation.
	\newblock {\em Comput. Meth. Appl. Mech. Engrg.}, 249-252:52--61, 2012.
	
	\bibitem{Gong2016Fully}
	Y.~Gong, X.~Liu, and Q.~Wang.
	\newblock Fully discretized energy stable schemes for hydrodynamic equations
	governing two-phase viscous fluid flows.
	\newblock {\em Journal of Scientific Computing}, 69:921--945, 2016.
	
	\bibitem{Gong&Zhao&WangCPC}
	Y.~Gong, J.~Zhao, and Q.~Wang.
	\newblock An energy stable algorithm for a quasi-incompressible hydrodynamic
	phase-field model of viscous fluid mixtures with variable densities and
	viscosities.
	\newblock {\em Computer Physics Communications}, 219:20--34, 2017.
	
	\bibitem{Gong&Zhao&WangACM}
	Y.~Gong, J.~Zhao, and Q.~Wang.
	\newblock Linear second order in time energy stable schemes for hydrodynamic
	models of binary mixtures based on a spatially pseudospectral approximation.
	\newblock {\em Advances in Computational Mathematics}, 44:1573--1600, 2018.
	
	\bibitem{Gong2018Second}
	Y.~Gong, J.~Zhao, and Q.~Wang.
	\newblock Second order fully discrete energy stable methods on staggered grids
	for hydrodynamic phase field models of binary viscous fluids.
	\newblock {\em Siam Journal on Scientific Computing}, 40(2):B528--B553, 2018.
	
	\bibitem{Gong&Zhao&Yang&WangSISC}
	Y.~Gong, J.~Zhao, X.~Yang, and Q.~Wang.
	\newblock Second-order linear schemes for hydrodynamic phase field models of
	viscous fluid flows with variable densities.
	\newblock {\em SIAM Journal on Scientific Computing}, 4(1):B138--B167, 2018.
	
	\bibitem{Gomez-crystal}
	B.~Gonzalez-Ferreiro, H.~Gomez, and I.~Romero.
	\newblock A thermodynamically consistent numerical method for a phase field
	model of solidification.
	\newblock {\em Commun Nonlinear Sci Numer Simulat}, 19:2309--2323, 2014.
	
	\bibitem{Guillen&Tierra2014}
	F.~Guillen-Gonzalez and G.~Tierra.
	\newblock Second order schemes and time-step adaptivity for {A}llen-{C}ahn and
	{C}ahn-{H}illiard models.
	\newblock {\em Computers and Mathematics with Applications}, 68(8):821--846,
	2014.
	
	\bibitem{WangChen-CH}
	J.~Guo, C.~Wang, S.~Wise, and X.~Yue.
	\newblock An h2 convergence of a second-order convex-splitting, finite
	difference scheme for the three-dimensional cahn-hilliard equation.
	\newblock {\em Communications in Mathematical Sciences}, 14(2):489--515, 2016.
	
	\bibitem{Guo&Lin&Lowengrub&Wise2017}
	Z.~Guo, P.~Lin, J.~Lowengrub, and S.~Wise.
	\newblock Mass conservative and energy stable finite difference methods for the
	quasi-incompressible navier-stokes-cahn-hilliard system: primitive variable
	and projection-type schemes.
	\newblock {\em Computer Methods in Applied Mechanics and Engineering},
	326:144--174, 2017.
	
	\bibitem{HairerBook}
	E.~Hairer, C.~Lubich, and G.~Wanner.
	\newblock {\em Geometric Numerical Integration: structure-preserving algorithms
		for ordinary differential equations}, volume~31.
	\newblock Springer, 2006.
	
	\bibitem{Han&WangJCP2014}
	D.~Han and X.~Wang.
	\newblock A second order in time uniquely solvable unconditionally stable
	numerical schemes for {C}ahn-{H}illiard-{N}avier-{S}tokes equation.
	\newblock {\em Journal of Computational Physics}, 290(1):139--156, 2015.
	
	\bibitem{Ju&Li&Qiao&ZhangMC2017}
	L.~Ju, X.~Li, Z.~Qiao, and H.~Zhang.
	\newblock Energy stability and error estimates of exponential time differencing
	schemes for the epitaxial growth model without slope selection.
	\newblock {\em Mathematics of Computation}, 87(312):1859--1885, 2018.
	
	\bibitem{Li&LiuJNS2004}
	B.~Li and J.~Liu.
	\newblock Epitaxial growth without slope selection: energetics, coarsening and
	dynamic scaling.
	\newblock {\em Journal of Nonlinear Science}, 14:429--451, 2004.
	
	\bibitem{Li&Liu2003}
	B.~Li and J.~G. Liu.
	\newblock Thin film epitaxy with or without slope selection.
	\newblock {\em European Journal of Applied Mathematics}, 14:713--743, 2003.
	
	\bibitem{Li2016Characterizing}
	D.~Li, Z.~Qiao, and T.~Tang.
	\newblock Characterizing the stabilization size for semi-implicit
	{Fourier}-spectral method to phase field equations.
	\newblock {\em SIAM Journal on Numerical Analysis}, 54(3):1653--1681, 2016.
	
	\bibitem{Li&WangJAM2014}
	Jun Li and Qi~Wang.
	\newblock A class of conservative phase field models for multiphase fluid
	flows.
	\newblock {\em Journal of Applied Mechanics}, 81(2):021004, 2014.
	
	\bibitem{Liu&ShenMMAS2013}
	F.~Liu and J.~Shen.
	\newblock Stabilized semi-implicit spectral deferred correction methods for
	allen-cahn and cahn-hilliard equations.
	\newblock {\em Mathematical Methods in the Applied Sciences}, 38:4564--4575,
	2013.
	
	\bibitem{Onsager1931}
	Lars Onsager.
	\newblock Reciprocal relations in irreversible processes. i.
	\newblock {\em Phys. Rev.}, 37:405--426, Feb 1931.
	
	\bibitem{Shen&Wang&Wise2012}
	J.~Shen, C.~Wang, X.~Wang, and S.~Wise.
	\newblock Second-order convex splitting schemes for gradient flows with
	{E}hrlich-{S}chwoebel type energy: application to thin film epitaxy.
	\newblock {\em SIAM Journal of Numerical Analysis}, 50(1):105--125, 2012.
	
	\bibitem{predictor-corrector}
	J.~Shen and J.~Xu.
	\newblock Stabilized predictor-corrector schemes for gradient flows with strong
	anisotropic free energy.
	\newblock {\em Communications in Computational Physics}, 24(3):635--654, 2018.
	
	\bibitem{SAV-2}
	J.~Shen, J.~Xu, and J.~Yang.
	\newblock A new class of efficient and robust energy stable schemes for
	gradient flows.
	\newblock {\em ArXiv}, page 1710.01331, 2017.
	
	\bibitem{SAV-1}
	J.~Shen, J.~Xu, and J.~Yang.
	\newblock The scalar auxiliary variable (sav) approach for gradient flows.
	\newblock {\em Journal of Computational Physics}, 353:407--416, 2018.
	
	\bibitem{Shen10_1}
	J.~Shen and X.~Yang.
	\newblock Numerical approximations of {A}llen-{C}ahn and {C}ahn-{H}illiard
	equations.
	\newblock {\em Disc. Conti. Dyn. Sys.-A}, 28:1669--1691, 2010.
	
	\bibitem{ShenJ4}
	J.~Shen, X.~Yang, and H.~Yu.
	\newblock Efficient energy stable numerical schemes for a phase field moving
	contact line model.
	\newblock {\em Journal of Computational Physics}, 284:617--630, 2015.
	
	\bibitem{Shen2010}
	Jie Shen and Xiaofeng Yang.
	\newblock A phase-field model and its numerical approximation for two-phase
	incompressible flows with different densities and viscosities.
	\newblock {\em SIAM J. on Scientific Computing}, 32:1159--1179, 2010.
	
	\bibitem{RKJCP2017}
	J.~Shin, H.~Lee, and J.~Lee.
	\newblock Unconditionally stable methods for gradient flow using convex
	splitting runge-kutta scheme.
	\newblock {\em Journal of Computational Physics}, 347:367--381, 2017.
	
	\bibitem{Vignal&Dalcin&Brown&Collier&CaloCAS2015}
	P.~Vignal, L.~Dalcin, D.~Brown, N.~Collier, and V.~Calo.
	\newblock An energy stable convex splitting for the phase-field crystal
	equation.
	\newblock {\em Computer and Structures}, 158:355--368, 2015.
	
	\bibitem{Wang&Wang&WiseDCDS2010}
	C.~Wang, X.~Wang, and S.~Wise.
	\newblock Unconditionally stable schemes for equations of thin film epitaxy.
	\newblock {\em Discrete and Continuous Dynamic Systems}, 28(1):405--423, 2010.
	
	\bibitem{Wang&WiseSINUMA2011}
	C.~Wang and S.~Wise.
	\newblock An energy stable and convergent finite-difference scheme for the
	modified phase field crystal equation.
	\newblock {\em SIAM Journal on Numerical Analysis}, 49(3):945--969, 2011.
	
	\bibitem{Wis09}
	S.~Wise~C. Wang and J.~S. Lowengrub.
	\newblock An energy-stable and convergent finite-difference scheme for the
	phase field crystal equation.
	\newblock {\em SIAM J. Numer. Anal.}, 47(3):2269--2288, 2009.
	
	\bibitem{WiseSINUMA2009}
	S.~Wise, C.~Wang, and J.~S. Lowengrub.
	\newblock An energy-stable and convergent finite-difference scheme for the
	phase field crystal equation.
	\newblock {\em SIAM Journal of Numerical Analysis}, 47(3):2269--2288, 2009.
	
	\bibitem{WangCheng-1}
	Y.~Yan, W.~Chen, C.~Wang, and S.~M. Wise.
	\newblock A second-order energy stable bdf numerical scheme for the
	cahn-hilliard equation.
	\newblock {\em Comm. Comput. Phys}, 32(2):572--602, 2018.
	
	\bibitem{Yang-EQ-PFC}
	X.~Yang and D.~Han.
	\newblock Linearly first- and second-order, unconditionally energy stable
	schemes for the phase field crystal equation.
	\newblock {\em Journal of Computational Physics}, 333:1116--1134, 2017.
	
	\bibitem{Yang&Li&Forest&Wang2016}
	X.~Yang, J.~Li, G.~Forest, and Q.~Wang.
	\newblock Hydrodynamic theories for flows of active liquid crystals and the
	generalized onsager principle.
	\newblock {\em Entropy}, 18(6):202, 2016.
	
	\bibitem{Yang&ZhaoViscousCH}
	X.~Yang and J.~Zhao.
	\newblock On linear and unconditionally energy stable algorithms for variable
	mobility cahn-hilliard type equation with logarithmic flory-huggins
	potential.
	\newblock {\em Comm. Compt. Phys}, In press:1--28, 2018.
	
	\bibitem{Yang&Zhao&WangJCP2017}
	X.~Yang, J.~Zhao, and Q.~Wang.
	\newblock Numerical approximations for the molecular beam epitaxial growth
	model based on the invariant energy quadratization method.
	\newblock {\em Journal of Computational Physics}, 333:102--127, 2017.
	
	\bibitem{Yang&Zhao&Wang&ShenM3AS2017}
	X.~Yang, J.~Zhao, Q.~Wang, and J.~Shen.
	\newblock Numerical approximations for a three components cahn-hilliard
	phase-field model based on the invariant energy quadratization method.
	\newblock {\em Mathematical Models and Methods in Applied Sciences},
	27:1993--2023, 2017.
	
	\bibitem{Zhao&Li&Wang&YangJSC2017}
	J.~Zhao, H.~Li, Q.~Wang, and X.~Yang.
	\newblock A linearly decoupled energy stable scheme for phase-field models of
	three-phase incompressible flows.
	\newblock {\em Journal of Scientific Computing}, 70:1367--1389, 2017.
	
	\bibitem{Zhao&Wang&YangCMAME2016}
	J.~Zhao, Q.~Wang, and X.~Yang.
	\newblock Numerical approximations to a new phase field model for two phase
	flows of complex fluids.
	\newblock {\em Computer Methods in Applied Mechanics and Engineering},
	310:77--97, 2016.
	
	\bibitem{Zhao&Yang&Gong&WangCMAME2017}
	J.~Zhao, X.~Yang, Y.~Gong, and Q.~Wang.
	\newblock A novel linear second order unconditionally energy-stable scheme for
	a hydrodynamic {Q} tensor model for liquid crystals.
	\newblock {\em Computer Methods in Applied Mechanics and Engineering},
	318:803--825, 2017.
	
	\bibitem{Zhao2018EQreview}
	J.~Zhao, X.~Yang, Y.~Gong, X.~Zhao, X.~Yang, J.~Li, and Q.~Wang.
	\newblock A general strategy for numerical approximations of non-equilibrium
	models-part {I}: Thermodynamical systems.
	\newblock {\em International Journal of Numerical Analysis Modeling},
	15(6):884--918, 2018.
	
\end{thebibliography}

\end{document}